\newcommand{\R}{\mathbb{R}}
\newcommand{\testfunctions}{\mathbf F}
\newtheorem{theorem}{Theorem}[section]
\newtheorem{lemma}[theorem]{Lemma}
\newtheorem{proposition}[theorem]{Proposition}
\newtheorem{corollary}[theorem]{Corollary}
\theoremstyle{definition}
\newtheorem{definition}[theorem]{Definition}
\newtheorem{example}[theorem]{Example}
\theoremstyle{remark}
\newtheorem{remark}[theorem]{Remark}
 \DeclareMathOperator{\conv}{conv}
\newcommand{\A}{A}
\newcommand{\B}{B}
\newcommand{\Cc}{C}
\newcommand{\OM}{\mathcal{O}^{\mathrm{aff}}}
\newcommand{\OMold}{\mathcal{O}}
\newcommand{\N}{{\rm I\! N}}
\newcommand{\Maff}{M_{\mathrm{r}}^{\mathrm{aff}}}
\newcommand{\Moc}{M_{\mathrm{r}}}
\newcommand{\Mc}{M}
\newcommand{\Munder}{M_{\mathrm{under}}}
\newcommand{\add}{\mathcal{ A}}
\renewcommand{\O}{\mathscr{D}}
\newcommand{\f}{\tilde f}
\newcommand{\cs}{{\mathcal{ S}^1}}
\newcommand{\be}{\begin{eqnarray}}
\newcommand{\ee}{\end{eqnarray}}
\newcommand{\ff}{f}
\newcommand{\COV}{calculus of variations}
\definecolor{dkgreen}{rgb}{0,0.4,0}
\definecolor{dkred}{rgb}{0.8,0.0,0}
\begin{document}
\title{\bf Occupation measure relaxations in  variational problems: the role of convexity}

\author[$1,2$]{ Didier Henrion}
\author[$1,2$]{Milan Korda}
\author[$3,4$]{Martin Kru\v{z}\'{i}k}
\author[$1$]{Rodolfo R\'{\i}os-Zertuche}

\affil[$1$]{\small LAAS-CNRS, Universit\'e de Toulouse, France} 
\affil[$2$]{\small Faculty of Electrical Engineering, Czech Technical University in Prague, Czechia}
\affil[$3$]{\small Czech Academy of Sciences, Institute of Information Theory and Automation,  Prague, Czechia}
\affil[$4$]{\small Faculty of Civil Engineering, Czech Technical University in Prague, Czechia}





\date{\today}

\maketitle

\begin{abstract}
    This work addresses the occupation measure relaxation of calculus of variations problems, which is an infinite-dimensional linear programming relaxation  amenable to numerical approximation by a hierarchy of  semidefinite  optimization problems. We address the problem of equivalence of this relaxation to the original problem. Our main result provides sufficient conditions for this equivalence. These conditions, revolving around the convexity of the data, are simple and apply in very general settings that may be of arbitrary dimensions and may include pointwise and integral constraints, thereby considerably strengthening the existing results. Our conditions are also extended to optimal control problems. In addition, we demonstrate how these results can be applied in non-convex settings, showing that the occupation measure relaxation is at least as strong as the convexification using the convex envelope; in doing so, we prove that a certain weakening of the occupation measure relaxation is equivalent to the convex envelope. This opens the way to application of the occupation measure relaxation in situations where the convex envelope relaxation is known to be equivalent to the original problem, which includes problems in magnetism and elasticity.

\end{abstract}

\section{Introduction}

The moment-sum-of-squares (SOS) hierarchy is a mathematical technology that consists of two steps: 1) formulating a nonlinear  problem (of calculus of variations, optimization or control) as a linear convex optimization problem on the cone of nonnegative measures; 2) solving approximately the infinite-dimensional linear problem on measures by a hierarchy of finite-dimensional convex  (typically semidefinite) optimization problems of increasing size. This hierarchy builds on the duality between the cone of moments and the cone of nonnegative polynomials, and its convergence relies on SOS representations of nonnegative polynomials -- see \cite{hkl20} for a recent overview.  {For calculus of variations and optimal control problems, the linear problem on measures in step 1 is referred to as the {\it occupation measure relaxation}.}

Whereas the convergence analysis of the hierarchy of finite-dimensional convex optimization problems to the solution of the infinite-dimensional linear problem on measures (in step 2) is now well understood,  an essential remaining difficulty  consists of ensuring that there is {\it no relaxation gap} when formulating (in step 1)  the original nonlinear  problem as a linear problem on measures. Indeed, if there is a relaxation gap, {it is currently not understood in full generality how }the approximate solutions obtained by the hierarchy are related to the solutions to the original problem, which is  undesirable. For optimal control or regions of attraction of nonlinear ordinary differential equations, the absence of a relaxation gap was ensured under convexity assumptions \cite{v93}, see \cite[Theorem 2.3]{lhpt08} and \cite[Assumption I]{gq09}, or also \cite[Assumption 2]{hk14}. For scalar hyperbolic conservation laws,  the absence of a relaxation gap was ensured by introducing appropriate entropy inequalities \cite{mwhl20}. See \cite[Remark 6]{khl21} for a discussion on the question of the relaxation gap in the general case of polynomial partial differential equations (PDEs). 

Any attempt to apply the moment-SOS hierarchy to solve nonlinear calculus of variations (\COV{}) problems \cite{d04} is invariably faced with this key relaxation gap issue. In \cite{cbfg21}, no relaxation gap was proved for three very specific \COV{} problems with an integrand which is quadratic convex both in the function and its gradient. Similarly, there is no relaxation gap when the integrand is separably convex in the function and the gradient \cite{ft22}. In \cite{kr22} the absence of a relaxation gap was proved for \COV{} problems with PDE constraints under convexity assumption in the gradient when the dimension of the codomain is 1, with the help of techniques from geometric measure theory. This reference also provides a counterexample with a positive relaxation gap when the codomain has dimension greater than one, despite the Lagrangian density being convex in the gradient. 

The main result of our paper, Theorem \ref{thm:main}, provides a simple proof of the absence of a relaxation gap for \COV{} in a very general dimension-independent setting for problems  with the Lagrangian and constraints that are jointly convex in the function and its gradient, thereby considerably strengthening the existing results. Concretely, on the one hand, the result eliminates the need for a separability of the Lagrangian in~\cite{ft22} and allows for a very broad class of constraints to be considered. On the other hand, the result eliminates the dimensionality dependence of~\cite{kr22}, at the price of stronger convexity assumptions. 

Similarly to~\cite{ft22}, our proof technique works for a large class of measures that satisfy a Liouville condition for test functions that are affine in codomain variables. We analyze this weaker relaxation with affine rather than nonlinear test functions further and prove in Theorem \ref{thm:convexificationb} that when it is applied to a nonconvex problem its infimum is equal to the infimum of the original problem convexified by taking the classical convex envelope. Therefore, for nonconvex problems, the value of the occupation measure relaxation with nonlinear test functions is sandwiched between the infimum of the original problem and the infimum of the convexification using the convex envelope. Consequently, the occupation measure relaxation is at least as strong as the convexification using the convex envelope. In particular, whenever one can prove the absence of a relaxation gap for the latter, it holds for the former. 

The convexity assumptions of our result hold for some problems in continuum physics, namely for static micromagnetics \cite{desim}, which we explore in detail in Section \ref{sec:magnetics}, and for some models of elasticity \cite{kohn, raoult}.
%
%
%

This opens the way to application of the occupation measure relaxation in situations where the convex envelope relaxation is known to be equivalent to the original problem. For these problems, the use of Young measures (also called parametrized measures) is classical, see e.g. \cite{y69,roubicek,pedregalenv}. As already explained in \cite{hkw19}, occupation measures can be seen as an extension of Young measures allowing the application of numerical methods based on the moment-SOS hierarchy.
    
The paper is organized as follows. In Section \ref{sec:statement} we state the \COV{} problem to be solved, and we introduce the occupation measures we are using to reformulate it as a linear optimization problem.
Section \ref{sec:convex} contains our main result Theorem \ref{thm:main} about the absence of a relaxation gap between the \COV{} problem and its linear formulation, under convexity assumptions. Non-convex problems are the subject of Section \ref{sec:nonconvex} where we show how our linear formulation relates with the classical convex envelope relaxation. Section \ref{sec:optimalcontrol} explains how our \COV{} results can be extended to optimal control problems. Finally, \COV{} applications in micromagnetics are described in Section \ref{sec:magnetics}.

\section{Calculus of variations and linear formulations with occupation measures}\label{sec:statement}

Let $\Omega \subseteq \R^n$ be a connected, bounded  domain with piecewise $C^1$ boundary $\partial\Omega$,
let $Y \subset \R^m$ and $Z \subset \R^{m\times n}$. Denote by $\sigma$ the $(n-1)$-dimensional Hausdorff measure on the boundary $\partial \Omega$. Let us denote the transpose of a matrix $A$ by $A^\top$.

Consider the calculus of variations 
problem
\begin{alignat}{2}\label{opt:classical}
 \Mc \coloneqq \inf_{y \in W^{1,p}(\Omega ; Y)}&\quad&& \int_\Omega L(x,y(x),Dy(x))\,dx\\
 \text{subject to}&&&
 \A_i(x,y(x),Dy(x)) = 0,\quad \B_i(x,y(x),Dy(x)) \le 0 \quad \text{a.e.\ } x\in \Omega ,\;i\in I,\label{eq:pde}\\
&&&\label{eq:boundary}
\A_{\partial,i}(x,y(x)) = 0,\quad \B_{\partial,i}(x,y(x)) \le 0 \qquad\text{$\sigma$-a.e. } x\in\partial\Omega,\; i\in I,\\
&&&\label{eq:integralconstraints}
\int_\Omega \Cc_i(x,y(x),Dy(x))\,dx\leq 0,\quad \int_{\partial\Omega} \Cc_{\partial,i}(x,y(x))\,d\sigma(x)\leq 0,\quad i\in I,
\end{alignat}
{where $I$ is a possibly uncountable index set and  $W^{1,p}(\Omega;Y)$ denotes the Sobolev space of measurable functions $\phi\colon \Omega\to Y$ that are weakly differentiable and such that both $\phi$ and its weak derivative $D\phi$ are in $L^p$. Considering constraints indexed by a possibly uncountable index set provides a significant modeling freedom, allowing one, for example, to model higher-order PDE constraints in the weak form as shown in Example \ref{ex:constraints}. }

\begin{example}[Plateau's problem]
Let $\Omega$ be the unit disc in $\R^2$, $Y=\R^3$, and $Z=\R^{2\times 3}$.
 Let $\gamma\colon[0,2\pi]\to \R^3$ be a continuous, non-self-intersecting loop, $\gamma(0)=\gamma(2\pi)$.
 let $L(x,y,Dy)=\left\|\frac{\partial y}{\partial x_1}\times \frac{\partial y}{\partial x_2}\right\|$, that is, $L$ is the area of the parallelogram spanned by the columns of $Dy$, known as the element of surface area. Let $A_{\partial,1}(x,y)$ be the distance from $y$ to the image $\gamma([0,2\pi])$. Let all other constraint functions $A_i,B_i,C_i,A_{\partial,i},B_{\partial_i},C_{\partial,i}$ be zero. In this setting, $M$ is the area of the minimal surface with boundary $\gamma([0,2\pi])$.
\end{example}

\begin{example}[Geodesics of a differential inclusion]
    Let $\Omega$ be the interval $[0,1]$ and let $V$ be a connected, open subset of $Y=\R^n$, and $Z=\R^n$. Fix two points $y_0$ and $y_1$ in $V$, as well as a Riemannian metric $g\colon \R^n\times \R^n\to\R_{\geq 0}$. Let $B_1\colon Y\times Z\to\R$ be a continuous function such that the sets $X(y)=\{z\in Z:B(y,z)\leq 0\}$ are nonempty for each $y\in Y$. Let $L(x,y,z)=\sqrt{g(z,z)}$, let $A_1(x,y,z)$ be the distance from $y$ to $V$, and $A_{\partial,1}(x,y,z)$ be the distance from $y$ to the set $\{y_0,y_1\}$. Then $M$ is the minimal $g$-distance from $y_0$ to $y_1$ for curves joining these points and respecting the constraint $\gamma(t)\in V$ and the differential inclusion $\gamma'(t)\in X(\gamma(t))$, and the minimizers are the geodesics joining these two points and respecting the differential inclusion. When $X(y)$ is a linear subspace of $Z$ for each $y$, this setting is known as  sub-Riemannian geometry.
\end{example}




\begin{example}[Higher-order PDEs]\label{ex:constraints}
Considering an uncountable index set in~\eqref{opt:classical} permits higher-order PDEs to be considered as constraints in~\eqref{opt:classical} through their weak reformulation. We demonstrate this on the Laplace equation.
 In the $m=1$ case, to model a {Laplace equation in the weak form}, we can use the constraint \[\int_{\Omega} \nabla\phi(x)^\top\nabla y(x)\,dx=0,\qquad\phi\in C^1_0(\Omega).\]
 Observe that if $y$ were a smooth function, integrating by parts we would obtain $\int \phi(x)\,\Delta y(x)\,dx=0$ for all $\phi\in C^1_0(\Omega)$, whence the above conditions indeed give a weak analogue of the Laplace equation $\Delta y=0$.
 In order to model it as in~(\ref{eq:integralconstraints}), we set $I$ to be $C^1_0(\Omega)$, and for each $\phi\in I=C^1_0(\Omega)$ we set $\Cc_\phi(x,y,z)=\nabla\phi(x)^\top z$. Observe that the equality constraint results from the inequalities for $\phi$ and $-\phi$, which are both contained in $C^1_0(\Omega)$.
\end{example}


To reformulate \COV{} problem \eqref{opt:classical} as a linear optimization problem, we now introduce two classes of measures. The first ones, the \textit{affinely relaxed occupation measures}, form a larger class that contains the second ones, the \textit{relaxed occupation measures}. Our main no-gap results will hold for the first class of measures (see Theorems \ref{thm:main} and \ref{thm:optimalcontrol}), and hence will also hold for the second one (see Corollary \ref{coro:unrelax} and Remark \ref{rmk:affineOC}). The second class is more commonly found in the literature, and we chose to introduce the first class in order to present our results in greater generality.

\begin{definition}[Affinely relaxed occupation measures]\label{def:M}
 Let $p\in [1,+\infty]$.
 Let $\OM_p$ be the set of pairs $(\mu,\mu_\partial)$ consisting of  positive Radon measures on $\overline\Omega\times Y\times Z$ respectively $\partial\Omega\times Y$ satisfying: 
 \begin{itemize} 
 \item The total mass of $\mu$ equals the Lebesgue measure of $\Omega$, that is, \begin{equation}\label{eq:measureomega}
 \mu(\overline\Omega\times Y\times Z)=|\Omega|.
\end{equation}
 \item The sets
 $\operatorname{supp}\mu$ and $\operatorname{supp}\mu_\partial$ are compact if $p=+\infty$, or
\begin{equation}\label{eq:Lp}
\int_{\Omega\times Y\times Z}\|y\|^p+\sum_i\|z_i\|^p\,d\mu(x,y,z)<+\infty\quad\text{and}\quad \int_{\partial\Omega\times Y}\|y\|^p\,d\mu_\partial(x,y)<+\infty
\end{equation}
if $p\in[1,+\infty)$. Here, $z=(z_1,\dots,z_n)$, $z_i\in \R^m$.

\item For all $\phi_1\in C^\infty(\Omega;\R)$ and all $\phi_2\in C^\infty(\Omega;\R^m)$ it holds
\begin{multline}\label{eq:boundarymeasure}
  \int_{\Omega\times Y\times Z}\left[\frac{\partial\phi_1}{\partial x}(x)+y^\top\frac{\partial\phi_2}{\partial x}(x)+\phi_2(x)^\top z\right]d\mu(x,y,z)\\
  =\int_{\partial\Omega\times Y} \left(\phi_1(x)+y^\top\phi_2(x)\right)\mathbf n(x)\,d\mu_\partial(x,y).
 \end{multline}
 Here $\mathbf n$ denotes the exterior unit (row) vector normal to the boundary $\partial\Omega$.

\end{itemize}
\end{definition}

\begin{remark}\label{rmk:liouville} 
Observe that \eqref{eq:boundarymeasure} amounts to the Liouville equation 
\[\int_{\Omega\times Y\times Z}\frac{\partial\phi}{\partial x}+\frac{\partial\phi}{\partial y}z\,d\mu=\int_{\partial\Omega\times Y}\phi\,\mathbf n\,d\mu_\partial  \]
for a test function $\phi\in C^\infty(\Omega\times Y;\R)$ that is affine in $y$, that is, of the form $\phi(x,y)=\phi_1(x)+y^\top\phi_2(x)$.
\end{remark}

\begin{remark}
The assumption of $\Omega$ being connected can be dropped at the price of replacing~(\ref{eq:measureomega}) by the slightly stronger requirement that its $x$-marginal  is equal to the Lebesgue measure on $\Omega$. When $\Omega$ is connected and its mass is equal to the volume of $\Omega$ (condition~\eqref{eq:measureomega}), its $x$-marginal is automatically equal to the Lebesgue measure on $\Omega$ due to~\eqref{eq:boundarymeasure} and Lemma~\ref{lem:lebesgueproj} in the appendix.
\end{remark}

\begin{definition}[Relaxed occupation measures]\label{def:M2}
Let $p\in[1,+\infty]$. Let $\OMold_p$ be the set of pairs $(\mu,\mu_\partial)\in \OM_p$ that additionally satisfy:
\begin{itemize}
\item For all $\phi \in \testfunctions_p$, it holds
\begin{equation}\label{eq:boundarymeasure2}
  \int_{\Omega\times Y\times Z}\frac{\partial\phi}{\partial x}(x,y)+\frac{\partial \phi}{\partial y}(x,y)z\,d\mu(x,y,z)
  =\int_{\partial\Omega\times Y} \phi(x,y)\mathbf n(x)\,d\mu_\partial(x,y),
 \end{equation}
where $\mathbf n$ denotes the exterior unit vector normal to the boundary $\partial\Omega$,
\begin{multline}
 \notag\testfunctions_p=\Big\{\phi\in C^\infty(\Omega\times Y):\exists\, c>0\;\text{such that}\;\left\|\frac{\partial\phi}{\partial x}(x,y)\right\|\leq c(1+\|y\|^p),\\\left\|\frac{\partial\phi}{\partial y}(x,y)\right\|\leq c(1+\|y\|^{p-1}),\;
  |\phi(x,y)|\leq c (1+\|y\|^p)  \Big\}, \quad p\in [1,\infty)
 \end{multline}
 and $\testfunctions_\infty = C^\infty(\Omega\times Y)$.
 By Lemma \ref{lem:integrability}, the integrals in~\eqref{eq:boundarymeasure2} are well defined.
\end{itemize}
\end{definition}

Consider the  relaxations with affinely relaxed occupation measures (Definition \ref{def:M}),
\begin{alignat}{2}\label{opt:relaxed}
\Maff\coloneqq\inf_{(\mu,\mu_\partial)\in \OM_p}&\quad&& \int_{\Omega\times Y\times Z} L(x,y,z)\,d\mu(x,y,z) \\
\nonumber\textrm{subject to}&& &    \operatorname{supp}\mu \subseteq \{(x,y,z)\in\Omega\times Y\times Z\mid  \A_i(x,y,z) = 0,\;B_i(x,y,z)\leq 0,\;i\in I \}, \\
   \nonumber&&& \operatorname{supp}\mu_\partial \subseteq \{(x,y)\in\partial\Omega\times Y\mid \A_{\partial,i}(x,y)=0,
   \;\B_{\partial,i}(x,y) \leq 0,\;i\in I\}, \\
   \nonumber&&&\int_{\Omega\times Y\times Z}\Cc_i(x,y,z)\,d\mu(x,y,z)\leq 0,\quad i\in I,\\
   \nonumber&&&\int_{\partial\Omega\times Y}\Cc_{\partial,i}(x,y)\,d\mu_\partial(x,y)\leq 0,\quad i\in I,
\end{alignat}
and with relaxed occupation measures (Definition \ref{def:M2}),
\begin{alignat}{2}\label{opt:relaxedold}
\Moc\coloneqq\inf_{(\mu,\mu_\partial)\in \OMold_p}&\quad&& \int_{\Omega\times Y\times Z} L(x,y,z)\,d\mu(x,y,z) \\
\nonumber\textrm{subject to}&& & \text{[same constraints as in \eqref{opt:relaxed}].}
\end{alignat}
Remark that the sole difference between \eqref{opt:relaxed} and \eqref{opt:relaxedold} is the set over which the infimum is taken, namely, $\OM_p$ and $\OMold_p$, respectively.

Observe also that problems \eqref{opt:relaxed} and \eqref{opt:relaxedold} are {\it linear} optimization problems in the unknown occupation measures $(\mu,\mu_{\partial})$, whereas the original \COV{} problem \eqref{opt:classical} can be {\it nonlinear} in the unknown function $y$.

\section{Convex case -- main result}\label{sec:convex}

Our main result states that under convexity assumption there is no relaxation gap between the original \COV{} problem and its linear reformulation with affinely relaxed occupation measure.

\begin{theorem}\label{thm:main}
Suppose that for every $i\in I $ and $x \in \Omega$ the following holds: $L$, $\B_i$ and $\Cc_i$ are convex in $(y,z)$, and $\A_i$ is affine in $(y,z)$. Suppose also that for every $i\in I$ and $x \in \partial\Omega$ the following holds: $\A_{\partial,i}$ is affine in $y$, and $\B_{\partial,i}$ and $\Cc_{\partial,i}$ are convex in $y$. Then, if $Y$ and $Z$ are closed and convex, we have 
\[\Maff =\Mc.\]
Recall that these are defined in \eqref{opt:classical} and \eqref{opt:relaxed}.
\end{theorem}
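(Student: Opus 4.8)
The plan is to establish the two inequalities $\Maff\le\Mc$ and $\Mc\le\Maff$ separately: the first is the general relaxation inequality and needs no convexity, while the second is where the convexity hypotheses enter, through Jensen's inequality applied to disintegrations of the measure.

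For $\Maff\le\Mc$ I would take any $y\in W^{1,p}(\Omega;Y)$ admissible for~\eqref{opt:classical} and build $(\mu,\mu_\partial)$ by pushing forward: let $\mu$ be the image of the Lebesgue measure on $\Omega$ under $x\mapsto(x,y(x),Dy(x))$, and $\mu_\partial$ the image of $\sigma$ under $x\mapsto(x,Ty(x))$, where $Ty$ is the trace. The mass condition~\eqref{eq:measureomega} and the $L^p$ bounds~\eqref{eq:Lp} are immediate from $y\in W^{1,p}$ and the trace theorem. The key observation is that the affine Liouville condition~\eqref{eq:boundarymeasure} is, for this $\mu$, exactly the Gauss--Green formula applied to the scalar function $u=\phi_1+y^\top\phi_2$: when $z=Dy$, the integrand $\frac{\partial\phi_1}{\partial x}+y^\top\frac{\partial\phi_2}{\partial x}+\phi_2^\top z$ is precisely $\nabla u$, so $\int_\Omega\nabla u\,dx=\int_{\partial\Omega}u\,\mathbf n\,d\sigma$ gives~\eqref{eq:boundarymeasure}. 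The support and integral constraints of~\eqref{opt:relaxed} transfer directly from the constraints~\eqref{eq:pde}--\eqref{eq:integralconstraints}, and the two objectives coincide by change of variables; taking the infimum over admissible $y$ yields $\Maff\le\Mc$.

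For the reverse inequality I would fix a feasible $(\mu,\mu_\partial)\in\OM_p$ and reconstruct a competitor for~\eqref{opt:classical}. Since $\Omega$ is connected and $\mu$ has mass $|\Omega|$, the remark after Definition~\ref{def:M} (via Lemma~\ref{lem:lebesgueproj}) gives that the $x$-marginal of $\mu$ is Lebesgue, so I may disintegrate $d\mu(x,y,z)=d\mu_x(y,z)\,dx$ with each $\mu_x$ a probability measure on $Y\times Z$. Define the barycenters $\overline y(x)=\int y\,d\mu_x$ and $\overline z(x)=\int z\,d\mu_x$; since $Y,Z$ are closed and convex, $\overline y(x)\in Y$ and $\overline z(x)\in Z$ a.e., and Jensen applied to $\|\cdot\|^p$ with~\eqref{eq:Lp} gives $\overline y,\overline z\in L^p$. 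Testing~\eqref{eq:boundarymeasure} with $\phi_1=0$ and $\phi_2\in C_c^\infty(\Omega)$, and using that its integrand is affine in $(y,z)$ and hence commutes with the barycenter, shows $\int_\Omega(\overline y^\top D\phi_2+\phi_2^\top\overline z)\,dx=0$, i.e.\ $\overline z$ is the weak gradient of $\overline y$; thus $\overline y\in W^{1,p}(\Omega;Y)$ with $D\overline y=\overline z$. Admissibility and the cost bound then follow from Jensen on $\mu_x$: since $\A_i$ is affine, $\A_i(x,\overline y,\overline z)=\int\A_i\,d\mu_x=0$ a.e.\ (the last step because $\operatorname{supp}\mu\subseteq\{\A_i=0\}$); convexity of $\B_i,\Cc_i$ gives $\B_i(x,\overline y,\overline z)\le\int\B_i\,d\mu_x\le 0$ a.e.\ and $\int_\Omega\Cc_i(x,\overline y,\overline z)\,dx\le\int\Cc_i\,d\mu\le 0$; and convexity of $L$ gives $\int_\Omega L(x,\overline y,\overline z)\,dx\le\int L\,d\mu$. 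Taking the infimum over feasible measures yields $\Mc\le\Maff$.

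I expect the main obstacle to be the boundary bookkeeping rather than the (routine) interior Jensen estimates. Unlike the interior, the $x$-marginal $\nu$ of $\mu_\partial$ is not a priori equal to $\sigma$, so identifying the trace of $\overline y$ with the barycenter $\overline y_\partial$ of $\mu_\partial$ requires care: I would compare the Gauss--Green formula for $\overline y$ with the Liouville identity~\eqref{eq:boundarymeasure} taken at $\phi_1=0$ to match the boundary vector measures $(T\overline y)\,\mathbf n\,\sigma$ and $\overline y_\partial\,\mathbf n\,\nu$, and only then use the affineness of $\A_{\partial,i}$ and convexity of $\B_{\partial,i},\Cc_{\partial,i}$ to transfer the constraints~\eqref{eq:boundary} and the boundary part of~\eqref{eq:integralconstraints} to $T\overline y$. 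Getting this relation between $\nu$ and $\sigma$ right is the delicate step.
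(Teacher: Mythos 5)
Your proposal is correct and follows essentially the same route as the paper's proof: pushforward measures for $\Maff\le\Mc$, then disintegration of $\mu$ (with Lemma~\ref{lem:lebesgueproj} giving the Lebesgue $x$-marginal), barycenters, Jensen's inequality, and weak differentiability from the affine Liouville condition tested with $\phi_1=0$, $\phi_2\in C_0^\infty(\Omega;\R^m)$. The boundary step you flag as delicate is resolved in the paper exactly along the lines you sketch: disintegrating $\mu_\partial$ against $\sigma$ and testing \eqref{eq:boundarymeasure} with arbitrary $\phi_2\in C^\infty(\Omega;\R^m)$ yields the Stokes-type identity that identifies the barycenter $y_\partial$ as the Sobolev trace of the reconstructed function via \cite[Theorem~4.6]{EvansGariepy}, after which affineness of $\A_{\partial,i}$ and Jensen for $\B_{\partial,i}$, $\Cc_{\partial,i}$ transfer the boundary constraints.
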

\begin{proof}
Clearly we have $\Maff \le\Mc$ because every function $y$ induces measures $(\mu,\mu_\partial)\in \OM_p$ by letting
\begin{gather*}\int_{\Omega\times Y\times Z}\phi(x,y,z)\,d\mu(x,y,z)=\int_{\Omega}\phi(x,y(x),Dy(x))\,dx,\quad \phi\in C^1(\Omega\times Y\times Z),\\
\int_{\partial\Omega\times Y}\phi(x,y)\,d\mu_\partial(x,y)=\int_{\partial\Omega}\phi(x,y(x))\,d\sigma(x),\quad \phi\in C^1(\Omega\times Y).
\end{gather*}
The harder part is to prove that $\Maff \ge\Mc$. To this end, let $(\mu,\mu_\partial)$ be feasible in~(\ref{opt:relaxed}). By Lemma \ref{lem:lebesgueproj}, the projection of $\mu$ onto $\Omega$ is Lebesgue measure $dx$. Disintegrate $\mu$ such that
\begin{equation}\label{eq:disintegration}
\mu(x,y,z) = \nu(y,z\mid x)\,dx,
\end{equation}
where $\nu$ is the conditional distribution of $(y,z)$ given $x$; in particular, for every fixed $x$, $\nu(\cdot,\cdot \mid x)$ is a probability measure. 
Define
\begin{equation}\label{eq:defyz}
y(x) := \int_{Y\times Z} y\,d\nu(y,z\mid x),\quad z(x) := \int_{Y\times Z} z\,d\nu(y,z\mid x)
\end{equation}
to be the conditional expectation (or centroid) of $\mu$ in the $y$ and $z$ variables given $x$. We note that $y(x) \in \R^m$ and $z(x)\in \R^{m\times n}$. By Jensen's inequality we have
\[
\int_{\Omega\times Y\times Z} L \,d\mu = \int_{\Omega} \int_{Y\times Z} L(x,y,z)\,d\nu(y,z\mid x)\,dx \ge \int_\Omega L(x,y(x),z(x))\,dx.
\]
Now we shall prove that $y(\cdot)$ is weakly differentiable and $Dy(x) = z(x)$.
To prove weak differentiability of $y(\cdot)$, take 
$\phi_1=0$ and $\phi_2=\psi \in C_0^\infty(\Omega;\R^m)$, 
and use it as a test function 
in~(\ref{eq:boundarymeasure}). This gives
\[
\int_{\Omega\times Y\times Z}y^\top\frac{\partial\psi}{\partial x}+\psi(x)^\top z\,d\mu(x,y,z) = 0.
\]
Using the disintegration of $\mu$ we obtain
\[
\int_{\Omega} \int_{Y\times Z} y^\top \frac{\partial\psi}{\partial x}+\psi(x)^\top z\,d\nu(y,z\mid x) dx = 0.
\]
Since the integrand is linear in $(y,z)$, we obtain, using the definitions of $y(\cdot)$ and $z(\cdot)$,
\[
\int_{\Omega} y(x)^\top\frac{\partial \psi}{\partial x} + \psi(x)^\top z(x)\,dx = 0,\quad \forall \psi \in C_0^\infty(\Omega).
\]
This is nothing but the definition of weak differentiability of $y(\cdot)$ with weak derivative $z(\cdot)$. 
If $p=+\infty$, $\operatorname{supp}\mu$ is bounded, and then $z(\cdot)$ is $L_\infty$, so it follows that $y \in W^{1,\infty}$.
If $p\in [1,+\infty)$, then $z(\cdot)$ is in $L_p$; indeed, by Jensen's inequality applied to the convex function $\|\cdot\|^p$, we have, writing $z=(z_1,\dots,z_m)$,
\begin{align*}&\int_{\Omega\times Y\times Z}\|z_i(x)\|^p\,dx=\int_\Omega\left\|\int_{Y\times Z}z_i\,d\nu(y,z\mid x)\right\|^pdx\\
&\leq \int_\Omega\int_{Y\times Z}\left\|z_i\right\|^p\,d\nu(y,z\mid x)dx=\int_{\Omega\times Y\times Z}\|z_i\|^p\,d\mu(x,y,z)<+\infty,
 \end{align*}
 by \eqref{eq:Lp}. A similar calculation shows that $y(\cdot)$ is in $L_p$ as well.
 It follows that $y$ is in $W^{1,p}$.

We also need to verify the constraints (\ref{eq:pde})--
(\ref{eq:integralconstraints}). This follows from the support constraints on $\mu$ and $\mu_\partial$ and the fact that $\A_i$ and $\A_{\partial,i}$ are affine and $\B_i$, $B_{\partial,i}$, $C_i$ and $C_{\partial,i}$ are convex in $(y,z)$. Indeed, the support constraints imply
\[
\int \psi(x) \A_i(x,y,z)\,d\mu(x,y,z) = 0 \quad \forall \psi \in C^0(\Omega).
\]
Using the disintegration \eqref{eq:disintegration}, and definition of $y(\cdot)$, $z(\cdot)$ and the fact that $\A_i$ is affine, we obtain
\[
\int \psi(x) \A_i(x,y(x),z(x))\,dx = 0 \quad \forall \psi \in C^0(\Omega)
\]
and hence the constraint $\A_i(x,y(x),z(x)) = 0$ is satisfied a.e.\ in $\Omega$, $i\in I$. For the inequality constraint $\B_i(x,y(x),Dy(x)) \le 0$, we observe that for any nonnegative $\psi \in C^0(\Omega)$, we have
\[
0 \ge \int \psi(x)\B_i(x,y,z) \,d\mu = \int_\Omega \int_{Y\times Z} \psi(x)\B_i(x,y,z) \,d\nu(y,z\mid x)\,dx \ge \int_\Omega \psi(x)\B_i(x,y(x),z(x))\,dx ,
\]
where the first inequality follows from the support constraints on $\mu$ and the last one from Jensen's inequality. Since $z(x) = Dy(x)$, this implies that $\B_i(x,y(x),Dy(x)) \le 0 $ a.e.\ in $\Omega$. Another application of Jensen's inequality gives, for all $i\in I$,
\[\int_{\Omega}\Cc_i(x,y(x),z(x))\,dx\leq \int_{\Omega}\int_{Y\times Z}\Cc_i(x,y,z)\,d\nu(y,z\mid x)\,dx=\int_{\Omega\times Y\times Z}\Cc_i(x,y,z)\,d\mu(x,y,z)\leq 0.\]

Let us now verify the boundary condition. To this end, disintegrate $\mu_\partial$ as 
\begin{equation}\label{eq:disintegrationbdry}
\mu_\partial = \nu_\partial(y\mid x)d\sigma(x)
\end{equation}
with $\sigma$ being the Hausdorff measure on $\partial \Omega$.  Define
\[
y_\partial(x) = \int y\,d\nu_\partial(y\mid x).
\]
We shall show that $y_\partial(\cdot)$ is a Sobolev trace of $y(\cdot)$ (see~\cite[Definition, 4.5]{EvansGariepy}). To this end, take $\phi_1=0$ and $\phi_2=\psi \in C^\infty(\Omega;\mathbb{R}^m)$, so that plugging 
in~(\ref{eq:boundarymeasure}) and using the fact that $z(x) = Dy(x)$ almost everywhere in $\Omega$ gives
\[
\int_{\Omega} y(x)^\top\frac{\partial \psi}{\partial x} + \psi(x)Dy(x)\,dx = \int_{\partial\Omega} \psi(x)^\top y_\partial(x) \mathbf{n}(x)\,d\sigma(x),\quad \forall \psi \in C^\infty(\mathbb{R}^n).
\]
This is the Stokes theorem for the Sobolev functions which shows that $y_\partial(\cdot)$ is indeed a Sobolev trace of $y(\cdot)$~\cite[Theorem~4.6]{EvansGariepy}. Hence, in order to show that $\A_{\partial,i}(x,y(x)) = 0$ on $\partial\Omega$ it suffices to prove it for $y_\partial$. The second support constraint of~(\ref{opt:relaxed}) implies that
\[
\int_{\Omega\times Y} \phi(x) \A_{\partial,i}(x,y)\,d\mu_\partial(x,y) = 0 \quad \forall \phi \in C^0(\partial\Omega).
\]
Using the disintegration \eqref{eq:disintegrationbdry}, the definition of $y_\partial$ and the fact that $\A_\partial$ is affine in $y$, it follows that
\[
\int_{\Omega\times Y} \phi(x) \A_{\partial,i}(x,y_\partial(x))\,dx= 0 \quad \forall \phi \in C^0(\partial\Omega).
\]
This implies that $y_\partial(\cdot)$ and hence $y(\cdot)$ satisfy the boundary condition $\sigma$ a.e.\ in $\partial\Omega$. For $p = +\infty$, $y(\cdot)$ is Lipschitz and hence $y(\cdot)$ satisfies it in fact everywhere in $\partial\Omega$. The boundary inequality constraint $\B_{\partial,i}(x,y(x)) \le 0$ follows by Jensen's inequality the same way as for $\B_i(x,y(x),Dy(x)) \le 0$, and similarly for the integral boundary constraint \eqref{eq:integralconstraints}.

It remains to verify that $y(x)\in Y$ for all $x\in\Omega$. This, however, follows by the fact that $Y$ is closed and convex and the last support constraint of~(\ref{opt:relaxed}).
\end{proof}

\begin{corollary}\label{coro:unrelax}
Under the assumptions of Theorem~\ref{thm:main}, we have $\Moc =\Maff=\Mc$.
\end{corollary}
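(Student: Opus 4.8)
The plan is to sandwich $\Moc$ between $\Maff$ and $\Mc$, both of which already coincide with $\Mc$ by Theorem \ref{thm:main}, so that the three quantities collapse to a single value. Concretely, I would establish the two inequalities $\Maff \le \Moc$ and $\Moc \le \Mc$ and then combine them with Theorem \ref{thm:main}.

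For the first inequality I would observe that $\OMold_p \subseteq \OM_p$ by construction: a relaxed occupation measure is required to satisfy the Liouville condition \eqref{eq:boundarymeasure2} for \emph{every} test function $\phi \in \testfunctions_p$, whereas an affinely relaxed one need only satisfy it for test functions affine in $y$, which is precisely condition \eqref{eq:boundarymeasure} (see Remark \ref{rmk:liouville}). Since problems \eqref{opt:relaxed} and \eqref{opt:relaxedold} minimize the same linear objective under identical support and integral constraints, and differ only in the feasible set, restricting the infimum to the smaller set $\OMold_p$ can only raise the value. Hence $\Maff \le \Moc$.

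For the second inequality I would show that every admissible $y \in W^{1,p}(\Omega;Y)$ for the original problem \eqref{opt:classical} induces a pair $(\mu,\mu_\partial) \in \OMold_p$ of matching cost. The measures are exactly the occupation measures defined at the beginning of the proof of Theorem \ref{thm:main}; the verification that they lie in $\OM_p$ and meet all the constraints of \eqref{opt:relaxedold} is already contained in that proof. The only additional point is the stronger Liouville identity \eqref{eq:boundarymeasure2}, namely that for every $\phi \in \testfunctions_p$,
\[
\int_\Omega \frac{\partial\phi}{\partial x}(x,y(x)) + \frac{\partial\phi}{\partial y}(x,y(x))\,Dy(x)\,dx
= \int_{\partial\Omega} \phi(x,y(x))\,\mathbf n(x)\,d\sigma(x).
\]
I would derive this from the chain rule for Sobolev functions, which identifies the left-hand integrand with $\operatorname{div}_x\!\big[\phi(x,y(x))\big]$, followed by the same Stokes/divergence theorem for Sobolev functions already invoked in the proof of Theorem \ref{thm:main}. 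This yields $\Moc \le \Mc$.

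The main obstacle is this last verification: one must confirm that the composition $x \mapsto \phi(x,y(x))$ is regular enough for the chain rule and the divergence theorem to apply, and that the growth conditions defining $\testfunctions_p$, together with the $L^p$ integrability \eqref{eq:Lp}, guarantee that all the integrals are finite and the boundary term is well defined — this is exactly where the bounds defining $\testfunctions_p$ and Lemma \ref{lem:integrability} enter. Once this is secured, the chain $\Mc = \Maff \le \Moc \le \Mc$ forces $\Moc = \Maff = \Mc$, completing the proof.
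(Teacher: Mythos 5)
Your proposal is correct and takes essentially the same route as the paper: the sandwich $\Maff \le \Moc \le \Mc$ (from the inclusion $\OMold_p \subseteq \OM_p$ and the fact that every $y \in W^{1,p}$ induces a pair in $\OMold_p$ with matching cost), combined with $\Maff = \Mc$ from Theorem~\ref{thm:main}. The only difference is that you spell out the chain-rule/Stokes verification of \eqref{eq:boundarymeasure2} for the induced measures, a point the paper's proof asserts without detail.
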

\begin{proof}
 Since $\OMold_p\subseteq\OM_p$ and $\OMold_p$ contains the pairs of measures induced by functions in $W^{1,p}$, we have that
 $\Maff\leq \Moc\leq \Mc$. Moreover, by Theorem \ref{thm:main} we know that $\Mc=\Maff$, so $\Moc$ must be equal to them as well.
\end{proof}

\section{Nonconvex case}\label{sec:nonconvex}

{We now turn to the case when the problem is not convex and show that the affinely relaxed occupation measures provide a lower bound at least as good as the the convex envelope relaxation and, in the absence of integral constraints, the two lower bounds coincide.

\subsection{Convex underestimators}
First, we make an observation comparing the value of the occupation measure relaxation to the value of the original problem with data replaced by its convex underestimators. Concretely, suppose that there exist convex sets $\hat Y \supset Y$ and  $\hat Z\supset Z$, functions $\hat L,\hat B_i,\hat C_i\colon\Omega\times\hat Y\times\hat Z \to\R$ such that for all $(x,y,z) \in \Omega\times Y\times Z$ satisfying $A_i(x,y,z) = 0$, $B_i(x,y,z) \le 0$, it holds $\hat L(x,y,z) \le L(x,y,z)$, $\hat B_i(x,y,z) \le B_i(x,y,z)$, $\hat C_i(x,y,z) \le C_i(x,y,z)$. Suppose also that there exist  functions $\hat L_\partial,\hat B_{\partial,i},\hat C_{\partial,i}\colon\Omega\times\hat Y\to\R$ such that for all $(x,y) \in \partial\Omega\times Y$ satisfying $A_{\partial,i}(x,y) = 0$, $B_{\partial,i}(x,y) \le 0$, it holds $\hat L_\partial(x,y) \le L_\partial(x,y)$, $\hat B_\partial(x,y) \le B_{\partial,i}(x,y)$ and $\hat C_{\partial,i}(x,y) \le C_{\partial,i}(x,y)$. Consider the problem
\begin{alignat}{2}
 \Munder =
 \inf_{y \in W^{1,p}(\Omega,\hat Y)} 
  &\quad&&
 \int_{\Omega}\hat L(x,y(x),Dy(x))dx
  +
 \int_{\partial\Omega} \hat L_\partial(x,y(x))\,d\sigma(x) \label{opt:convexified}\\
  \text{subject to}&&& 
   \A_i(x,y(x),Dy(x))=0,\quad \hat \B_i(x,y(x),Dy(x))\leq 0,\quad x\in\Omega,\;i\in I, \nonumber\\
  &&& \A_{\partial,i}(x,y(x))=0,\quad \hat\B_{\partial,i}(x,y(x))\leq 0,\quad x\in\partial\Omega,\;i\in I, \nonumber\\
  &&& \int_{\Omega}\hat\Cc_i(x,y(x),Dy(x))\,dx\leq 0,\quad \int_{\partial\Omega}\hat\Cc_{\partial,i}(x,y(x))\,dx\leq 0,\quad i\in I. \nonumber
\end{alignat}
We have the following simple corollary, where we for simplicity assume that $A_i$ and $A_{\partial,i}$ are affine, i.e., they do not need further convexification. This could be relaxed by replacing $A_i$ and $A_{\partial,i}$ by $A_i^2$ and $A_{\partial,i}^2$ and considering convex underestimators thereof.
\begin{corollary}\label{coro:nonconvex}
Suppose that $A_i$ is affine in $(y,z)$ for every $x\in \Omega$ and $A_{\partial,i}$ is affine in $y$ for every $x\in \partial\Omega$. It holds $\Munder \le \Maff \le M_{\mathrm{r}} \le\Mc$ (defined, respectively, in \eqref{opt:convexified}, \eqref{opt:relaxed}, \eqref{opt:relaxedold}, and \eqref{opt:classical}). In particular, if  $\Munder = \Mc $, then $\Maff = M_{\mathrm{r}} = \Mc $. 
\end{corollary}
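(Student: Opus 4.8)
The plan is to establish the chain $\Munder \le \Maff \le \Moc \le \Mc$ and then read off the ``in particular'' statement by a sandwich argument. The two rightmost inequalities are immediate from inclusions of feasible sets, exactly as in the proof of Corollary~\ref{coro:unrelax}: since $\OMold_p \subseteq \OM_p$, taking the infimum of the same objective over the larger set $\OM_p$ can only decrease it, giving $\Maff \le \Moc$; and since every admissible $y \in W^{1,p}(\Omega;Y)$ induces a pair $(\mu,\mu_\partial)\in\OMold_p$ that is feasible in~\eqref{opt:relaxedold} with identical objective value, we get $\Moc \le \Mc$. The substance of the corollary is therefore the leftmost inequality $\Munder \le \Maff$.

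For $\Munder \le \Maff$ I would argue by applying the already-proved Theorem~\ref{thm:main} to the convexified problem~\eqref{opt:convexified}. By construction its data satisfy the hypotheses of that theorem: $\hat L, \hat B_i, \hat C_i$ are convex in $(y,z)$, $\hat B_{\partial,i}, \hat C_{\partial,i}$ are convex in $y$, $A_i$ and $A_{\partial,i}$ are affine (by assumption of the corollary), and $\hat Y, \hat Z$ are convex (and, replacing them by their closures if necessary, closed). Hence Theorem~\ref{thm:main}, applied to~\eqref{opt:convexified} in the immediate extension that also carries a boundary objective term $\int_{\partial\Omega}\hat L_\partial\,d\sigma$ (handled by Jensen's inequality on $\mu_\partial$ exactly as the interior term $\int_\Omega\hat L\,dx$ is handled in the proof of Theorem~\ref{thm:main}), yields that $\Munder$ equals the value of the affinely relaxed occupation measure relaxation of~\eqref{opt:convexified}; call this value $\hat M$.

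It then remains to show $\hat M \le \Maff$, which I would do by checking that every $(\mu,\mu_\partial)$ feasible in the original relaxation~\eqref{opt:relaxed} is also feasible in the relaxation of~\eqref{opt:convexified}, with no larger objective value. Membership in the affinely relaxed class is preserved because Definition~\ref{def:M} (the mass condition, the $L^p$/compactness condition, and the Liouville identity~\eqref{eq:boundarymeasure}) does not involve the data $L, B_i, C_i$ and because $Y\subseteq\hat Y$, $Z\subseteq\hat Z$. The support constraints transfer because on $\operatorname{supp}\mu\subseteq\{A_i=0,\ B_i\le 0\}$ one has $\hat B_i \le B_i \le 0$, and similarly on $\operatorname{supp}\mu_\partial$; the integral constraints transfer because $\int \hat C_i\,d\mu \le \int C_i\,d\mu \le 0$ (and likewise on the boundary), using $\hat C_i \le C_i$ on the support; and the objective does not increase because $\hat L \le L$ (and $\hat L_\partial \le L_\partial$ on the boundary) there. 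Taking the infimum over all such $(\mu,\mu_\partial)$ gives $\hat M \le \Maff$, whence $\Munder = \hat M \le \Maff$. The main obstacle is precisely this constraint-transfer step: the underestimator inequalities $\hat L\le L$, $\hat B_i\le B_i$, $\hat C_i\le C_i$ are only assumed on the feasibility set $\{A_i=0,\ B_i\le 0\}\cap(\Omega\times Y\times Z)$, so one must use that this is exactly where $\operatorname{supp}\mu$ lives, and keep careful track of the domain enlargement $Y\subseteq\hat Y$, $Z\subseteq\hat Z$, whose convexity is what makes Theorem~\ref{thm:main} applicable to~\eqref{opt:convexified} and whose inclusion is what makes the original measures admissible there. (Alternatively, one could bypass the black-box use of Theorem~\ref{thm:main} and redo its disintegration-plus-Jensen argument directly with the convex data $\hat L,\hat B_i,\hat C_i$, which leads to the same conclusion.)

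Finally, for the ``in particular'' statement: if $\Munder = \Mc$, then the chain $\Munder \le \Maff \le \Moc \le \Mc = \Munder$ forces equality throughout, so $\Maff = \Moc = \Mc$.
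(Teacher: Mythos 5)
Your proposal is correct and follows essentially the same route as the paper's proof: apply the no-gap result (Theorem~\ref{thm:main}, via Corollary~\ref{coro:unrelax} in the paper) to the convexified problem~\eqref{opt:convexified}, transfer feasibility of any $(\mu,\mu_\partial)$ from~\eqref{opt:relaxed} to the relaxation of the convexified problem using $\hat L \le L$, $\hat B_i \le B_i$, $\hat C_i \le C_i$ on $\operatorname{supp}\mu$, and close with the trivial chain $\Maff \le \Moc \le \Mc$. If anything, your write-up is more careful than the paper's on two points it glosses over: you take the \emph{affinely} relaxed value of the convexified problem as the intermediate quantity (so feasibility transfer from $\OM_p$ is immediate, whereas the paper's $\hat M_{\mathrm{r}}$ nominally involves the smaller class $\OMold_p$), and you flag that Theorem~\ref{thm:main} needs its routine Jensen-type extension to handle the boundary objective term $\int_{\partial\Omega}\hat L_\partial\,d\sigma$ present in~\eqref{opt:convexified}.
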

\begin{proof}
Let $\hat M_\mathrm{r}$ denote the infimum of the occupation measure relaxation  applied to the convexified problem~(\ref{opt:convexified}).
Applying Corollary~\ref{coro:unrelax}, we have $\hat M_\mathrm{r} = \Munder $. Given a pair of measures $(\mu,\mu_\partial)$ feasible in~(\ref{opt:relaxed}), we observe that these two measures are feasible in the occupation measure relaxation of the convexified problem since the feasible set of the latter is larger than the feasible set of the original problem. Since $\hat L \le L$, it follows that $\hat M_{\mathrm{r}} \le \Maff$ and as a result $\Munder\le \Maff \le M_{\mathrm{r}} \le \Mc $ since always $\Maff \le M_{\mathrm{r}} \le M$.
\end{proof}

Note that when $\hat \Mc  =  \Mc  = \Maff = \Mc_{\mathrm{r}}$, one has two choices for a relaxation to be solved numerically: either the convexification~(\ref{opt:convexified}) or the occupation measure relaxation~(\ref{opt:relaxed}). Both are convex optimization problems and it depends on particular situation whether one or the other is more advantageous to solve. For example, the occupation measure relaxation may be preferable when no explicit description of the  nonnegative convex underestimators used in~(\ref{opt:convexified}) is available.}

\subsection{Equivalence with convex envelope}

Now we significantly refine the previous observation and prove that in fact $\Maff$ is equal to the infimum of a convexification of the problem based on the convex envelope of the Lagrangian and the constraints. As a result, under the assumptions introduced below, we obtain the following inequalities
\[
\text{Convex envelope} = \text{Affinely relaxed OM} \le \text{Relaxed OM} \le \text{Original},
\]
where OM stands for ``occupation measures.'' To set the stage for proving this, we let $\conv X$ denote the closed convex hull of a set $X \subset \R^n$. Let $p=+\infty$, and let $Y$ and $Z$ be compact sets. For $x\in\Omega$, let $K(x)$ denote the set of points $(y,z)\in Y\times Z$ such that the constraints $A_i(x,y,z)=0$ and $B_i(x,y,z)\leq 0$ are satisfied. We set $C_i = 0$, $C_{i,\partial} = 0$, i.e., we work without integral constraints in this section, {except for Examples  \ref{ex:integralconstraints} and \ref{ex:integralconstraints2}, that will show that we would not be able to obtain the same results in the presence of such constraints.}
 
 Let $\hat L\colon 
 \Omega\times \conv Y\times\conv Z\to\R\cup\{+\infty\}$ be the function whose restriction to $\{x\}\times \operatorname{conv} K(x)$ is the {convex envelope} of $L|_{\{x\}\times K(x)}$; in other words, for $x\in \Omega$,
  \begin{align*}
 \hat L(x,y,z)=\sup\{\varphi(y,z)\mid \;&\text{$\varphi\colon \conv Y\times \conv Z\to\R$ convex, }\\
 & \varphi(y',z')\leq L(x,y',z')\;\forall \,(y',z')\in K(x)\}.
 \end{align*}

   Analogously, let $K_\partial(x)\subseteq Y$ be, for each $x\in \partial\Omega$, the set of points $y\in Y$ such that $A_{\partial,i}(x,y)=0$ and $B_{\partial,i}(x,y)\leq 0$. 
Let $\hat A_i(x)=\conv (A_i|_{\{x\}\times Y\times Z})^{-1}(0)$ and $\hat A_i=\bigcup_{x\in\Omega}\hat A_i(x)$; in other words, $\hat A_i$ is the convexification in the $Y\times Z$ directions of the set $A_i^{-1}(0)$. Similarly, let $\hat A_{\partial,i}(x)=\conv (A_{\partial,i}|_{\{x\}\times Y})^{-1}(0)$ and $\hat A_{\partial,i}=\bigcup_{x\in\partial\Omega}\hat A_{\partial,i}(x)$.
 
Let
\begin{alignat}{2}
 \hat \Mc =&
 \inf_{y \in W^{1,\infty}(\Omega,\conv Y)} 
  &\quad&
 \int_{\Omega}\hat L(x,y(x),Dy(x))dx
 \label{opt:convexified2b}\\
  &\text{subject to}&& 
   (y(x),Dy(x))\in \conv K(x),\quad \text{a.e.\ }x\in \Omega, \nonumber\\
   &&&y(x)\in\conv K_\partial(x),\quad\text{a.e.\ }x\in\partial\Omega. \nonumber
\end{alignat}


The following result shows  that the convexified problem \eqref{opt:convexified2b} has the same infimum as the occupation measure relaxation \eqref{opt:relaxed} with affine test functions applied to the original problem with nonconvex data.
\begin{theorem} \label{thm:convexificationb}
 Assume that $K(x)$ is closed for each $x\in\Omega$ and that the set-valued map $K$ is continuous in the topology induced by the Hausdorff metric. 
 If $L$ is continuous in $(y,z)$ {for every $x \in \Omega$},
 then it holds that
 \[\hat \Mc=\Maff.\]
\end{theorem}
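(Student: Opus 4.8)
The plan is to prove the two inequalities $\hat\Mc \le \Maff$ and $\Maff \le \hat\Mc$ separately, reusing the disintegration machinery from the proof of Theorem~\ref{thm:main}. For the direction $\hat\Mc \le \Maff$ I would take any pair $(\mu,\mu_\partial)$ feasible for~\eqref{opt:relaxed}, disintegrate $\mu = \nu(\cdot\mid x)\,dx$ (legitimate by Lemma~\ref{lem:lebesgueproj}), and set $y(x) = \int y\,d\nu(\cdot\mid x)$ and $z(x) = \int z\,d\nu(\cdot\mid x)$. Exactly as in Theorem~\ref{thm:main}, $y \in W^{1,\infty}(\Omega;\conv Y)$ with $Dy = z$ a.e., and the boundary disintegration produces the Sobolev trace $y_\partial$. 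Since $\operatorname{supp}\nu(\cdot\mid x) \subseteq K(x)$, the barycenter $(y(x),Dy(x))$ lies in $\conv K(x)$, and likewise $y_\partial(x) \in \conv K_\partial(x)$, so $y$ is feasible for~\eqref{opt:convexified2b}. The key inequality is the Jensen bound for the convex envelope: because $\hat L(x,\cdot,\cdot)$ is convex and satisfies $\hat L \le L$ on $K(x)$, one gets $\int L\,d\nu(\cdot\mid x) \ge \int \hat L\,d\nu(\cdot\mid x) \ge \hat L(x,y(x),Dy(x))$. Integrating over $\Omega$ shows $\int L\,d\mu \ge \int_\Omega \hat L(x,y(x),Dy(x))\,dx \ge \hat\Mc$, and taking the infimum over feasible pairs yields $\Maff \ge \hat\Mc$.

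For the reverse inequality $\Maff \le \hat\Mc$ I would, given any $y$ feasible for~\eqref{opt:convexified2b}, reconstruct conditional measures realizing the convex envelope. The crucial analytic input is the integral representation $\hat L(x,\bar y,\bar z) = \min\{\int_{K(x)} L(x,\cdot,\cdot)\,d\nu : \nu \in \mathcal P(K(x)),\ \text{barycenter}(\nu) = (\bar y,\bar z)\}$, valid here because $L(x,\cdot,\cdot)$ is continuous and $K(x)$ compact, so the minimum is attained over the weak-$*$ compact set of probability measures with the prescribed barycenter. Applying Carathéodory to the graph of $L$ over $K(x)$ in $\R^{m(n+1)+1}$, this minimizer may be taken finitely supported. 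I would then select, measurably in $x$, weights $\lambda_j(x)$ and points $p_j(x) \in K(x)$ with $\sum_j \lambda_j(x) p_j(x) = (y(x),Dy(x))$ and $\sum_j \lambda_j(x) L(x,p_j(x)) = \hat L(x,y(x),Dy(x))$, and set $\nu(\cdot\mid x) = \sum_j \lambda_j(x)\delta_{p_j(x)}$; an analogous but simpler barycentric selection on $\partial\Omega$ gives $\nu_\partial(\cdot\mid x)$ supported on $K_\partial(x)$ with barycenter $y_\partial(x)$, the trace of $y$.

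Defining $\mu = \nu(\cdot\mid x)\,dx$ and $\mu_\partial = \nu_\partial(\cdot\mid x)\,d\sigma(x)$, I would verify $(\mu,\mu_\partial) \in \OM_\infty$ by checking~\eqref{eq:boundarymeasure}: since the integrand there is affine in $(y,z)$, integrating against $\mu$ replaces $(y,z)$ by the barycenter $(y(x),Dy(x))$, and the resulting identity is exactly the Gauss--Green formula for the Lipschitz function $y$ with trace $y_\partial$ — the computation of Theorem~\ref{thm:main} run in reverse. The support constraints hold by construction, the supports are compact (so membership for $p=+\infty$ is fine), the total mass of $\mu$ equals $|\Omega|$, and by the envelope identity $\int L\,d\mu = \int_\Omega \hat L(x,y(x),Dy(x))\,dx$. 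Taking the infimum over feasible $y$ gives $\Maff \le \hat\Mc$.

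The main obstacle is the measurable selection of $x \mapsto (\lambda_j(x),p_j(x))$ realizing the convex envelope, and this is exactly where the hypotheses enter. I would frame it as a selection from the nonempty, closed-valued set-valued map $x \mapsto S(x)$ collecting all admissible finite representations; the Hausdorff continuity of $K$ together with the continuity of $L$ render the graph of $S$ measurable and guarantee that $x \mapsto \hat L(x,y(x),Dy(x))$ is measurable, after which the Kuratowski--Ryll-Nardzewski theorem supplies a measurable selector. Some care is needed to ensure that $\hat L$ composed with the measurable path $x\mapsto(y(x),Dy(x))$ is measurable; this again follows from continuity of the data, and one may alternatively invoke a measurable value-function (maximum) theorem to obtain the optimal value and the selector simultaneously.
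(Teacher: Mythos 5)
Your proposal is correct and follows essentially the same route as the paper: one inequality via disintegration, barycenters, and Jensen's inequality applied to the convex envelope $\hat L$; the other by representing $(y(x),Dy(x))$ as the barycenter of a probability measure on $K(x)$ realizing $\hat L(x,y(x),Dy(x))$, chosen measurably in $x$ via the Kuratowski--Ryll-Nardzewski selection theorem, and then setting $\mu=\nu(\cdot\mid x)\,dx$. The only technical difference is that you realize the envelope with finitely supported measures via Carath\'eodory's theorem and select the atoms and weights, whereas the paper invokes Choquet's theorem and performs the measurable selection directly on the weak-$*$ compact set of representing probability measures (via Prokhorov compactness and the argmin map), which avoids handling a finite list of atoms but is otherwise the same argument.
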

\begin{proof}
 Let us first show that $\Maff\leq \hat \Mc$.
 Let $y\in W^{1,p}(\Omega,\conv Y)$ be viable for the optimization problem $\hat\Mc$. By Choquet's Theorem \cite{lukevnetukavesely}, we know that, for each $x\in\Omega$ where $Dy(x)$ is defined, there is a probability measure $\nu_x$ supported in $K(x)$ such that 
 \begin{equation}
     \label{eq:choquet}
     (y(x),Dy(x))=\int_{K(x)}(y,z)\,d\nu_x(y,z)\quad\text{and}\quad\hat L(x,y(x),Dy(x))=\int_{K(x)}L(x,y,z)\,d\nu_x(y,z).
 \end{equation}

Let us show that the mapping $x\mapsto\nu_x$ can be chosen so that it is measurable. Let $\mathcal M_x$ be the set of measures $\nu$ on $K(x)$ such that $\int_{K(x)}(y,z)\,d\nu(y,z)=(y(x),Dy(x))$. Endow $\mathcal M_x$ with the weak* topology with respect to $C^0(Y\times Z;\R)$. Let 
\[I_L(\nu,x)=\int_{\{x\}\times Y\times Z}L\,d\nu\qquad\text{and}\qquad\phi(x)=\min_{\nu\in \mathcal M_x} I_L(\nu,x)=\hat L(x,y(x),Dy(x)).\] 
With these definitions, $I_L$ is continuous and $\phi$ is measurable. By the Prokhorov theorem, $\mathcal M_x$ is compact and metrizable. The inverse map $\Phi_x=I_L(\cdot,x)^{-1}$, that is, $\Phi_x(r)=\{\nu\in \mathcal M_x:I_L(\nu,x)=r\}$, $r\in\R$, is weakly measurable as well, since every open set $U\subset \mathcal M_x$ can be written as a countable union of compact sets $U=\bigcup_iK_i$, and $\Phi_x^{-1}(U)=\bigcup_i\Phi_x^{-1}(K_i)=\bigcup_i I_L(K_i,x)$ is a countable union of continuous images of compact sets, each of which is compact, so the union is measurable. Then the set-valued map 
\[H(x)=I_L(\cdot,x)^{-1}(\phi(x))=\operatornamewithlimits{arg\,\min}_{\nu\in\mathcal M_x}\int_{\{x\}\times Y\times Z}L\,d\nu \]
is measurable, as it is the composition of two measurable maps. The image $H(x)$ is closed, so we may apply the  Kuratowski--Ryll-Nardzewski Selection Theorem \cite[Th. 18.13]{aliprantisborder} to get an appropriate measurable function $x\mapsto\nu_x\in H(x)$.

Letting $\mu=\nu_x(y,z)\,dx$ we obtain a measure that is viable for $\Maff$; to see that it satisfies \eqref{eq:boundarymeasure} note that since $\nu_x\in\mathcal M_x$, for each $x$ the averages \eqref{eq:defyz} on $Y$ and $Z$ coincide almost everywhere with $y(x)$ and $z(x)=Dy(x)$, respectively, and \eqref{eq:boundarymeasure} is linear in these variables, so the fact that the Liouville equation for affine test functions,
\[ \int_{\Omega}\left[\frac{\partial\phi_1}{\partial x}(x)+y(x)^\top\frac{\partial\phi_2}{\partial x}(x)+\phi_2(x)^\top Dy(x)\right]dx
  =\int_{\partial\Omega} \left(\phi_1(x)+y(x)^\top\phi_2(x)\right)\mathbf n(x)\,d\sigma(x),\]
is verified for $y(\cdot)$ implies that \eqref{eq:boundarymeasure} is verified for $\mu$. The measure $\mu$ also satisfies 
\begin{equation*}
\int_{\Omega\times Y\times Z} L(x,y,z)\,d\mu(x,y,z)=\int_{\Omega}\int_{  Y\times Z} L(x,y,z)\,d\nu_x(y,z)\,dx=\int_\Omega \hat L(x,y(x),Dy(x))\,dx
\end{equation*}
by \eqref{eq:choquet}.
Arguing analogously in the boundary $\partial\Omega$ to obtain a measure $\mu_\partial$ supported in $\bigcup_{x\in\partial \Omega}\{x\}\times K_\partial(x)$ from $y(\cdot)|_{\partial\Omega}$, this implies that $\Maff\leq\hat\Mc$. 

Now we turn to showing that $\hat\Mc\leq \Maff$. Let $(\mu,\mu_\partial)$ be viable for $\Maff$. 
 Define $y(x)$ and $z(x)$ for $x\in \Omega$ as in \eqref{eq:defyz}; then the same arguments as in the proof of Theorem \ref{thm:main} show that $y(\cdot)$ is a $W^{1,p}$ function on $\Omega$ whose weak derivative is $Dy(x)=z(x)$ for almost every $x\in\Omega$, as well as the fact that the average of $y$ with respect to $\mu_\partial$ coincides with the trace $y(\cdot)|_{\partial\Omega}$. Moreover, $(y(x),Dy(x))\in \conv K(x)$ for almost every $x\in\Omega$, since it is an average of points in $(\operatorname{supp}\mu)\cap(\{x\}\times Y\times Z)\subseteq K(x)$, and similarly $y(x)\in\conv K_\partial(x)$ for a.e.\ $x\in \partial \Omega$. By the Jensen inequality, we have
 \begin{gather*}
 \int_{\Omega\times Y\times Z}L(x,y,z)\,d\mu(x,y,z)\,d\mu\geq \int_{\Omega\times Y\times Z}\hat L(x,y,z)\,d\mu(x,y,z)\geq \int_{\Omega}\hat L(x,y(x),Dy(x))\,dx.
 \end{gather*}
 This shows that $\hat\Mc\leq \Maff$. 
 \end{proof}

\begin{example}[With integral constraints we may have $\hat \Mc<\Maff$; nonconvex $Y$]\label{ex:integralconstraints} Let $Y\subset \R^2$ be the set containing only the four points $(y_1,y_2)$ with $y_i\in\{-1,1\}$. Let $\Omega=(0,1)\subset\R$ and $Z=\overline{B(0,1)}\subset\R^2$ the closed unit disc. Thus $K(x)=\{(\pm1,\pm1)\}\times \overline{B(0,1)}$ for all $x\in (0,1)$. Let, for $(x,y,z)\in \Omega\times Y\times Z$ and writing $y=(y_1,y_2)$,
\[L(x,y,z)=y_1y_2+\|z\|^2,\qquad C(x,y,z)=-y_1y_2.\]
Observe that $y_1y_2$ only takes the values $\pm1$ on $Y$.
The convexifications in $(y,z)$ are equal to $\conv Y=[-1,1]^2$, $\conv Z=Z=\overline{B(0,1)}$, $\conv K(x)= [-1,1]^2\times \overline{B(0,1)}$, and, for $x\in (0,1)$ and $(y,z)\in\conv K(x)$,
\[\hat L(x,y,z)=|y_1+y_2|-1+\|z\|^2,\qquad \hat C(x,y,z)=|y_2-y_1|-1.\]
Consider the problem 
\begin{alignat}{2}
 \hat \Mc =&
 \inf_{y \in W^{1,\infty}(\Omega,\conv Y)} 
  &\quad&
 \int_{\Omega}\hat L(x,y(x),Dy(x))dx
\label{opt:convexified2c}\\
  &\text{subject to}&& 
   (y(x),Dy(x))\in \conv K(x),\quad \text{a.e.\ }x\in \Omega, \nonumber\\
   &&&\int_{\Omega}\hat\Cc(x,y(x),Dy(x))\,dx\leq 0. \nonumber
\end{alignat}
This problem is analogous to \eqref{opt:convexified2b} in the bulk, with the addition of the integral constraint. Let us compare it with the problem \eqref{opt:relaxed} of finding $\Maff$ (with $L_\partial=A_i=A_{\partial,i}=B_i=B_{\partial,i}=C_{\partial,i}=0$). If $y\colon(0,1)\to\R^2$ is a constant curve whose image is a point in the segment joining $(-1,1)$ to $(1,-1)$, then the integral of $\hat L$ takes its minimal value of $-1$, and since the points in that segment that are close to the origin satisfy $\hat C(x,y,z)<0$, we conclude that $\hat\Mc=-1$. On the other hand, a measure $\mu$ viable in \eqref{opt:relaxed} we have
\begin{multline*}
    \int_{\Omega\times Y\times Z} L(x,y,z)\,d\mu(x,y,z)=\int_{\Omega\times Y\times Z} y_1y_2+\|z\|^2\,d\mu(x,y,z)\\
    =\int_{\Omega\times Y\times Z} -C(x,y,z)+\|z\|^2\,d\mu(x,y,z).
\end{multline*}
Thus in order to satisfy the constraint that the integral of $C$ be $\leq 0$, the integral of $L$ must be $\geq 0$. The  integral of $L$ achieves its minimal value of 0 by letting $\mu$ be uniformly distributed on $\Omega\times Y\times \{0\}$. This means that $\Maff= 0>-1=\hat\Mc$.
\end{example}

\begin{example}[With integral constraints we may have $\hat \Mc<\Maff$; nonconvex $Z$]\label{ex:integralconstraints2} 
We now modify Example \ref{ex:integralconstraints} slightly to show that a similar situation can occur when the nonconvexity appears in the set of allowable velocities $Z$.

Let $Z\subset \R^2$ be the set containing only the four points $(z_1,z_2)$ with $y_i\in\{-1,1\}$. Let $\Omega=(0,1)\subset\R$ and $Y=\overline{B(0,1)}\subset\R^2$ the closed unit disc. Thus $K(x)=\overline{B(0,1)}\times \{(\pm1,\pm1)\}$ for all $x\in (0,1)$. Let, for $(x,y,z)\in \Omega\times Y\times Z$ and writing $z=(z_1,z_2)$,
\[L(x,y,z)=z_1z_2,\qquad C(x,y,z)=-z_1z_2.\]
Observe that $z_1z_2$ only takes the values $\pm1$ on $Z$.
The convexifications in $(y,z)$ are equal to $\conv Y=Y=\overline{B(0,1)}$, $\conv Z=[-1,1]^2$, $\conv K(x)= \overline{B(0,1)}\times [-1,1]^2$, and, for $x\in (0,1)$ and $(y,z)\in\conv K(x)$,
\[\hat L(x,y,z)=|z_1+z_2|-1,\qquad \hat C(x,y,z)=|z_2-z_1|-1.\]
Consider the problem \eqref{opt:convexified2c}. Let us compare it with the problem \eqref{opt:relaxed} of finding $\Maff$ (with $L_\partial=A_i=A_{\partial,i}=B_i=B_{\partial,i}=C_{\partial,i}=0$). If $y\colon(0,1)\to\R^2$ is a  curve whose derivatives $Dy(x)$ are, for almost every $x\in(0,1)$, contained in the segment joining $(-1,1)$ to $(1,-1)$, then the integral of $\hat L$ takes its minimal value of $-1$, and since the points in that segment that are close to the origin satisfy $\hat C(x,y,z)<0$, we conclude that $\hat\Mc=-1$; for an explicit example, one may take the curve 
\[
 y(x)=
  \begin{cases}
  (x,-x),&x\in (0,\tfrac12),\\
  (\tfrac12-x,-\tfrac12+x),&x\in[\tfrac12,1).
  \end{cases}
\]
On the other hand, a measure $\mu$ viable in \eqref{opt:relaxed} we have
\begin{equation*}
    \int_{\Omega\times Y\times Z} L(x,y,z)\,d\mu(x,y,z)=\int_{\Omega\times Y\times Z} z_1z_2\,d\mu(x,y,z)
    =-\int_{\Omega\times Y\times Z} C(x,y,z)\,d\mu(x,y,z).
\end{equation*}
Thus in order to satisfy the constraint that the integral of $C$ be $\leq 0$, the integral of $L$ must be $\geq 0$. The  integral of $L$ achieves its minimal value of 0 by letting $\mu$ be uniformly distributed on $\Omega\times  \{0\}\times Z$. This means that $\Maff= 0>-1=\hat\Mc$.
\end{example}

\section{Optimal control}
\label{sec:optimalcontrol}

{This section extends our results to optimal control. We first formulate the results under as general assumptions as possible and then present their simplified versions. We shall denote the control inside $\Omega$ by $u:\Omega\to U$ and on the boundary by $u:\partial\Omega\to U_\partial$. We shall assume that $U$ and $U_\partial$ are complete separable metric spaces (e.g., closed subsets of a Euclidean space). The function $\pi_{\Omega\times Y\times Z} : \Omega\times Y\times Z\times U \to \Omega\times Y\times Z$ will denote the projection on $\Omega\times Y\times Z$,. i.e., $\pi_{\Omega\times Y\times Z}(x,y,z,u) = (x,y,z)$. Similarly $\pi_{\partial\Omega\times Y}:\partial\Omega\times Y\times U_\partial \to \partial\Omega\times Y$ is defined by $\pi_{\partial\Omega\times Y}(x,y,u) = (x,y)$}. 

Let  $\A_i,\B_i\colon\Omega\times Y\times Z\times U\to\R$ and $\Cc_i\colon\Omega\times Y\times Z\to\R$ be Borel measurable functions for all $i$ in a (possibly uncountably-infinite) index set $I$. Similarly, let $\A_{\partial,i},\B_{\partial,i}\colon\partial\Omega\times Y\times U_\partial\to\R$ and $\Cc_{\partial,i}\colon\partial\Omega\times Y\to\R$ be Borel measurable functions for all $i\in I$. Let also $L\colon\Omega\times Y\times Z\times U\to\R$ and $L_\partial\colon\partial\Omega\times Y\times U_\partial\to\R$ be measurable and locally bounded functions.



\subsection{Problem statement}
For $p\in[1,+\infty]$, consider the optimal control problem
\begin{alignat*}{2}
 \Mc^{\mathrm{oc}}=
 \inf_{\substack{y\colon\Omega\to Y\\u\colon\Omega\to U\\ u_\partial\colon\partial\Omega\to U_\partial}} 
  &\quad&&
 \int_{\Omega}L(x,y(x),Dy(x),u(x))dx
  +
 \int_{\partial\Omega} L_\partial(x,y(x),u_\partial(x))\,d\sigma(x)\\
  \text{subject to}&&& y\in W^{1,p}(\Omega;Y),
 \quad u,u_\partial\text{ are measurable},\\
  &&& \A_i(x,y(x),Dy(x),u(x))=0,\quad\B_i(x,y(x),Dy(x),u(x))\leq 0,\quad x\in\Omega,\;i\in I,\\
  &&& \A_{\partial,i}(x,y(x),u(x))=0,\quad\B_{\partial,i}(x,y(x),u_\partial(x))\leq 0,\quad x\in\partial\Omega,\;i\in I,\\
  &&& \int_{\Omega}\Cc_i(x,y(x),Dy(x))\,dx\leq 0,\quad \int_{\partial\Omega}\Cc_{\partial,i}(x,y(x))\,dx\leq 0,\quad i\in I.
\end{alignat*}
Consider also its relaxation, for $p\in [1,+\infty)$:
 \begin{alignat*}{2}
   M_{\mathrm r}^{\mathrm{oc}}=
   \inf_{\substack{\mu \in\mathcal M(\Omega\times Y\times Z\times U)\\
   \mu_\partial \in\mathcal M( \partial\Omega\times Y\times U_\partial)
   }} & \quad&&\int_{\Omega\times Y\times Z\times U}\hspace{-5mm}L(x,y,z,u)d\mu(x,y,z,u)
   +\int_{\partial\Omega\times Y\times U_\partial}\hspace{-5mm}L_\partial(x,y,u)\,d\mu_\partial(x,y,u)\\
   \text{subject to}&&& \textstyle\operatorname{supp}\mu\subseteq\bigcap_{i\in I}\A_i^{-1}(0)\cap \B_i^{-1}((-\infty,0]),\\
    &&&\textstyle\operatorname{supp}\mu_\partial\subseteq\bigcap_{i\in I}\A_{\partial,i}^{-1}(0)\cap\B_{\partial,i}^{-1}((-\infty,0]),\\
&&& \textstyle\int_{\Omega\times Y\times U}\Cc_i(x,y,z)\,d\mu(x,y,z,u)\leq 0,\\
   &&&\textstyle\int_{\partial\Omega\times Y\times U_\partial}\Cc_{\partial,i}(x,y)\,d\mu_\partial(x,y,u)\leq 0,\quad i\in I,\\   
   &&&\textstyle\mu(\Omega\times Y\times Z\times U)=|\Omega|,\\
   &&&\textstyle\int_{\Omega\times Y\times Z\times U}\|y\|^p+\|z\|^p\,d\mu(x,y,z,u)<+\infty,\\
   &&&\textstyle\int_{\partial\Omega\times Y\times U_\partial}\|y\|^p\,d\mu_\partial(x,y,u)<+\infty,\\
      &&&\textstyle\int_{\Omega\times Y\times Z\times U}\frac{\partial\phi_1}{\partial x}+y^\top\frac{\partial\phi_2}{\partial x}+\phi_2^\top z\,d\mu=\int_{\partial\Omega\times Y\times U_{\partial}}(\phi_1+y^\top\phi_2)\mathbf 
      n\,d\mu_\partial\\
    &&&\hspace{4cm}\forall\phi_1\in C^\infty(\Omega; \R),\;\phi_2\in C^\infty(\Omega;\R^m).
 \end{alignat*}
 For $p=+\infty$, the constraints involving integrals of $\|\cdot\|^p$ are replaced by the assumption that the supports of $\mu$ and $\mu_\partial$ be compact. 
 
 \begin{remark}\label{rmk:affineOC}
 The measures involved in $M_{\mathrm r}^{\mathrm{oc}}$ satisfy a Liouville equation for affine test functions only, that is, for functions $\phi(x,y)=\phi_1(x)+\phi(x)y$, and are thus analogous to the affinely relaxed occupation measures $\OM_p$; cf. Remark \ref{rmk:liouville}. A corollary to Theorem \ref{thm:optimalcontrol} analogous to Corollary \ref{coro:unrelax} holds, although for brevity we do not state it explicitly.
 \end{remark}

\subsection{General formulation}
The functions $L$ and $L_\partial$ have close relatives that do not depend on $U$ and $U_\partial$ and that will play a role in our assumptions: let the functions $\bar L\colon\Omega\times Y\times Z\to\R$ and $\bar L_\partial\colon\partial\Omega\times Y\to\R$ be defined by
 \begin{gather*}
  \bar L(x,y,z)=\inf\{L(x,y,z,u):u\in U,\;\A_i(x,y,z,u)=0,\;\B_i(x,y,z,u)\leq 0,\;i\in I\}\\
  \bar L_\partial(x,y)=\inf\{L_\partial(x,y,u):u\in U_\partial,\;\A_{\partial,i}(x,y,u)=0,\;\B_{\partial,i}(x,y,u)\leq 0, \;i\in I\}.
 \end{gather*}
 
Assume the following:
\begin{enumerate}[label=OC\arabic*.,ref=OC\arabic*]
 \item \label{oc:first} 
 \label{oc:convexcond1} (Fiberwise convexity of the constraints) We assume:
 \begin{enumerate}
 \item$\pi_{\Omega\times Y\times Z}(\bigcap_{i\in I}\A_i^{-1}(0)\cap\B_i^{-1}((-\infty,0]))\cap\{x\}\times Y\times Z$ is nonempty and convex for all $x\in \Omega$.
 \item
 $\pi_{\partial\Omega\times Y}(\bigcap_{i\in I}\A_{\partial,i}^{-1}(0)\cap \B_{\partial,i}^{-1}((-\infty,0]))\cap\{x\}\times Y$ is nonempty and convex for all $x\in\partial\Omega$ and all $i\in I$.
 \item
$\Cc_i(x,y,z)$ is convex in $(y,z)$, and
\item $\Cc_{\partial,i}(x,y)$ is convex in $y$, for all $x\in \Omega$ and all $i\in I$.
 \end{enumerate}
 
 \item \label{oc:closedsublevels} 
   (Fiberwise minima are attained by the controls)
   The functions $L$ and $L_{\partial}$ reach their minima in $U$ and $U_\partial$, respectively, once we fix $(x,y,z)$; in other words, the sets
   \begin{gather*}
    \operatornamewithlimits{arg\,min}_{\substack{u\in U\\\A_i(x,y,z,u)=0\\\B_i(x,y,z,u)\leq 0}}L(x,y,z,u)\qquad\text{and}\qquad\operatornamewithlimits{arg\,min}_{\substack{u\in U_\partial \\\A_{\partial,i}(x,y,u)=0\\\B_{\partial,i}(x,y,u)\leq 0}}L_{\partial}(x,y,u),
   \end{gather*}
    are nonempty and closed for all  $(x,y,z)\in\Omega\times Y\times Z$ and $(x,y)\in \partial\Omega\times Y$, respectively.
 \item\label{oc:Lconvex} (Convexity of the Lagrangian densities)
 \begin{enumerate}
     \item 
 $\bar L(x,y,z)$ is locally bounded and convex in $(y,z)$ for each fixed $x\in\Omega$.
 \item
 $\bar L_\partial(x,y)$ is locally bounded and convex in $y$ for each fixed $x\in\partial\Omega$.
\end{enumerate}
 \label{oc:last}
\end{enumerate}

\subsection{Simplified formulation 1}

Here is a simple situation that satisfies assumptions \ref{oc:first}--\ref{oc:last} above.  Let $p=+\infty$, $\Omega\subset\R^n$ be as above, $Y\subseteq \R^m$ and $Z\subseteq \R^{m\times n}$ be compact, convex sets with nonempty interior. Let $U$ and $U_\partial$ be compact and convex subsets of $\R^d$ for some $d\in\N$. Assume that,  for each $x\in\Omega$, $L(x,y,z,u)$, $B_i(x,y,z,u)$, and $C_i(x,y,z,u)$ are convex in $(y,z,u)$, and that $A_i(x,y,z,u)$ is affine in $(y,z,u)$. Then we are in the setting concerning our results below. Similar assumptions on the boundary Lagrangian density $L_\partial$ and in the boundary constraints $A_{\partial,i}$, $B_{\partial,i}$, and $C_{\partial,i}$ are also possible within our framework.

\subsection{Simplified formulation 2}

Here is another situation that satisfies assumptions \ref{oc:first}--\ref{oc:last} above. For simplicity we will ignore the boundary part of the problem. Let $p=+\infty$, $\Omega\subset\R^n$ be as above, $Y\subseteq \R^m$ and $Z\subseteq \R^{m\times n}$ be compact, convex sets with nonempty interior. Let $U$ be compact subset of $\R^n$, and let $f\colon \Omega\times Y\times U\to Z$ be a continuous function such that, for each $x\in\Omega$, the image of the map $(x,y,u)\mapsto (y,f(x,y,u))$ is convex\footnote{This is true for example when $f$ depends only on $(x,u)$ and $f(\{x\}\times U)$ is convex for each $x\in\Omega$.}. Let $\ell\colon\Omega\times Y\times U\to \R$ be a continuous Lagrangian density such that 
\[ 
 \bar \ell(x,y,z)=\inf_{\substack{u\in U\\f(x,u)=z}} \ell(x,y,u)
\]
is convex in $(y,z)$. 
Consider the optimal control problem
\begin{alignat*}{2}
\inf_{(y,u)}&\quad&&\int_{\Omega}\ell(x,y(x),u(x))\,dx\\
\text{subject to}&&&
Dy(x)=f(x,y(x),u(x)),\quad\text{a.e.\ $x\in\Omega$},\\
&&&a_i(x,y(x),f(x,y(x),u(x))=0,\quad i\in I,\text{a.e.\ $x\in\Omega$,}\\
&&&b_i(x,y(x),f(x,y(x),u(x)))\leq 0,\quad i\in I,\text{a.e.\ $x\in\Omega$,}\\
&&&\int_\Omega c_i(x,y(x),f(x,y(x),u(x)))\,dx\leq 0,\quad i\in I,
\end{alignat*}
where $a_i,b_i,c_i\colon\Omega\times Y\times Z\to\R$ are continuous and satisfy
$a_i|_{\{x\}\times Y\times Z}^{-1}(0)$ is convex for each $x\in \Omega$, and
$b_i$ and $c_i$ are convex in $(y,z)$ for fixed $x\in\Omega$.

To model this situation in our setup above, we add a constraint of the form 
\[A_0(x,y,z,u)=\operatorname{dist}(z,f(x,y,u))\]
so that the constraint $A_0= 0$ means that the velocity $z$ is controlled by $u$ via $z\in f(x,u)$. We let, for $i\in I$, $i\neq 0$,
\begin{gather*}
    A_i(x,y,z,u)=a_i(x,y,z),\qquad B_i(x,y,z,u)=b_i(x,y,z),\\
    C_i(x,y,z,u)=c_i(x,y,z), 
\end{gather*}
and
\[L(x,y,z,u)=\ell(x,y,u)\qquad\text{and}\qquad\bar L(x,y,z)=\bar\ell(x,y,z).\]
With these assumptions, one can check easily that \ref{oc:first}--\ref{oc:last} hold.

\subsection{Optimal control -- main result}

In the optimal control context outlined above, we have the following theorem.
 
 \begin{theorem}\label{thm:optimalcontrol}
  In the setting described above, with assumptions \ref{oc:first}--\ref{oc:last}, we have $\Mc^{\mathrm{oc}}=M_r^{\mathrm{oc}}$.
 \end{theorem}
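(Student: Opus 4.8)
The plan is to prove the two inequalities $M_{\mathrm r}^{\mathrm{oc}}\le \Mc^{\mathrm{oc}}$ and $M_{\mathrm r}^{\mathrm{oc}}\ge \Mc^{\mathrm{oc}}$ separately. The first is routine: given any admissible triple $(y,u,u_\partial)$ for $\Mc^{\mathrm{oc}}$, I would form the occupation measures $\mu$ and $\mu_\partial$ as the pushforwards of Lebesgue measure on $\Omega$ under $x\mapsto(x,y(x),Dy(x),u(x))$ and of $\sigma$ under $x\mapsto(x,y(x),u_\partial(x))$. Weak differentiability of $y$ together with the Stokes theorem for Sobolev functions gives the affine Liouville equation exactly as in Theorem~\ref{thm:main}, the pointwise constraints translate into support constraints, and the integral and mass constraints are immediate, so $(\mu,\mu_\partial)$ is admissible for $M_{\mathrm r}^{\mathrm{oc}}$ with the same value; hence $M_{\mathrm r}^{\mathrm{oc}}\le \Mc^{\mathrm{oc}}$.

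For the reverse (harder) inequality, the idea is to first eliminate the control and reduce to the convex calculus-of-variations situation of Theorem~\ref{thm:main}, and only at the end reconstruct a control by a measurable selection. Given $(\mu,\mu_\partial)$ admissible for $M_{\mathrm r}^{\mathrm{oc}}$, I would push them forward along the projections $\pi_{\Omega\times Y\times Z}$ and $\pi_{\partial\Omega\times Y}$ to obtain measures $\tilde\mu$ and $\tilde\mu_\partial$ that forget the control. Because the affine test functions in the Liouville equation do not depend on $u$, the pair $(\tilde\mu,\tilde\mu_\partial)$ satisfies the same Liouville equation, and the support constraint on $\mu$ forces $A_i=0$, $B_i\le0$ to hold $\mu$-a.e., so $L(x,y,z,u)\ge\bar L(x,y,z)$ on $\operatorname{supp}\mu$ by the very definition of $\bar L$. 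Integrating yields $\int L\,d\mu\ge\int\bar L\,d\tilde\mu$ and likewise on the boundary, so it suffices to bound $\int\bar L\,d\tilde\mu+\int\bar L_\partial\,d\tilde\mu_\partial$ from below by an admissible value of $\Mc^{\mathrm{oc}}$. Now $\bar L$ and $\bar L_\partial$ are convex by~\ref{oc:Lconvex}, so I would run the argument of Theorem~\ref{thm:main} verbatim: disintegrate $\tilde\mu=\nu(y,z\mid x)\,dx$, set $y(x)$ and $z(x)$ to be the conditional centroids as in~\eqref{eq:defyz}, deduce from the Liouville equation that $y\in W^{1,p}$ with $Dy=z$, and use Jensen's inequality to get $\int\bar L\,d\tilde\mu\ge\int_\Omega\bar L(x,y(x),Dy(x))\,dx$. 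The convexity in~\ref{oc:convexcond1} guarantees that the centroid $(y(x),Dy(x))$ lands in the fiberwise feasible set $\pi_{\Omega\times Y\times Z}(\bigcap_i A_i^{-1}(0)\cap B_i^{-1}((-\infty,0]))\cap(\{x\}\times Y\times Z)$ for a.e.\ $x$, and the convex integral constraints $C_i$, $C_{\partial,i}$ are preserved by another application of Jensen, exactly as in Theorem~\ref{thm:main}.

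The genuinely new step, and the main obstacle, is reconstructing admissible controls from $y(\cdot)$. For a.e.\ $x$ the point $(y(x),Dy(x))$ lies in the above projected feasible fiber, so by~\ref{oc:closedsublevels} the set of minimizing controls
\[
 G(x)=\operatornamewithlimits{arg\,min}_{\substack{u\in U\\ A_i(x,y(x),Dy(x),u)=0\\ B_i(x,y(x),Dy(x),u)\le0}}L(x,y(x),Dy(x),u)
\]
is nonempty and closed, and on it $L$ attains the value $\bar L(x,y(x),Dy(x))$. I would show that $x\mapsto G(x)$ is a measurable closed-valued map — using the measurability of the data and an argument of the type used in the proof of Theorem~\ref{thm:convexificationb} to render $x\mapsto\nu_x$ measurable — and then invoke the Kuratowski--Ryll-Nardzewski selection theorem \cite[Th.~18.13]{aliprantisborder} to obtain a measurable control $u(x)\in G(x)$; the same construction on $\partial\Omega$ with $\bar L_\partial$ yields a measurable $u_\partial(\cdot)$. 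By construction $(y,u,u_\partial)$ satisfies all pointwise, boundary and integral constraints and has cost $\int_\Omega\bar L(x,y,Dy)\,dx+\int_{\partial\Omega}\bar L_\partial(x,y)\,d\sigma\le\int L\,d\mu+\int L_\partial\,d\mu_\partial$, so it is admissible for $\Mc^{\mathrm{oc}}$ and witnesses $\Mc^{\mathrm{oc}}\le M_{\mathrm r}^{\mathrm{oc}}$. The delicate points I expect to handle carefully are the measurability of $x\mapsto G(x)$ and the nonemptiness of $G(x)$, which rests on the centroid $(y(x),Dy(x))$ actually lying in the projected feasible fiber — this uses the convexity in~\ref{oc:convexcond1} together with closedness of that fiber, automatic in the compact simplified settings; everything else is inherited from Theorem~\ref{thm:main}.
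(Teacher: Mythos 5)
Your proposal is correct and follows essentially the same route as the paper: the easy direction via pushforward occupation measures, and the hard direction via disintegration, conditional centroids, Jensen's inequality applied to the convex $\bar L$ and $\bar L_\partial$ from \ref{oc:Lconvex}, constraint verification via \ref{oc:convexcond1}, and reconstruction of the controls by a Kuratowski--Ryll-Nardzewski measurable selection from the fiberwise $\operatorname{arg\,min}$ sets, whose nonemptiness and closedness is exactly assumption \ref{oc:closedsublevels}. Your preliminary pushforward along $\pi_{\Omega\times Y\times Z}$ to forget the control is only a cosmetic repackaging of the paper's direct disintegration of $\mu$, and where you leave the measurability of $x\mapsto G(x)$ slightly open, the paper settles it by citing a characterization theorem for set-valued maps with measurable graphs and closed values \cite[Th.\ 8.1.4]{aubinfrankowska} before applying the selection theorem.
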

 \begin{proof}
 To see that $\Mc^{\mathrm{oc}}\geq M_r^{\mathrm{oc}}$, note that for every triad of functions $(y,u,u_\partial)$ that are minimization candidates in $\Mc^{\mathrm{oc}}$, we can produce a pair of measures $(\mu,\mu_\partial)$ defined by
 \begin{align*}
   \int_{\Omega\times Y\times Z\times U}f(x,y,z,u)d\mu(x,y,z,u)&=\int_{\Omega}f(x,y(x),Dy(x),u(x))\,dx,\quad f\in C^0(\Omega\times Y\times Z\times U),\\
   \int_{\partial\Omega\times Y\times U_\partial}f(x,y,u)\,d\mu_\partial(x,y,u)&=\int_{\partial\Omega}f(x,y(x),u_\partial(x))\,d\sigma(x),\quad f\in C^0(\partial\Omega\times Y\times U_\partial);
 \end{align*}
 then it is immediate that the pair $(\mu,\mu_\partial)$ is a minimization candidate in $M_{\mathrm{r}}^{\mathrm{oc}}$ that satisfies all the relevant constraints, and also satisfies
 \[\int_{\Omega\times Y\times Z\times U} L\,d\mu+\int_{\partial\Omega\times Y\times U_\partial}L_\partial\,d\mu_\partial=\int_{\Omega}L(x,y(x),Dy(x),u(x))\,dx+\int_{\partial\Omega}L_\partial(x,y(x),u_\partial(x))\,d\sigma(x).\]

  Let us see that  $\Mc^{\mathrm{oc}}\leq M_r^{\mathrm{oc}}$. Pick a minimization candidate $(\mu,\mu_\partial)$ for $M_{\mathrm{r}}^{\mathrm{oc}}$ satisfying all the constraints of that problem. 
 As in the proof of Theorem \ref{thm:main}, disintegrate
 \[\mu(x,y,z,u)=\nu(y,z,u\mid x)\,dx,\quad \mu_\partial(x,y,u)=\nu_\partial(y,u\mid x)\,d\sigma(x),\]
 and define
 \begin{gather*}
  y(x)=\int_{Y\times  Z\times U}y\,d\nu(y,z,u\mid x),\quad z(x)=\int_{Y\times Z\times U}z\,d\nu(y,z,u\mid x),\\
  y_\partial(x)=\int_{Y\times U_\partial}y\,d\nu_\partial(y,u\mid x).
 \end{gather*}
 Then the same arguments as in the proof of Theorem \ref{thm:main} show that $z(\cdot)$ is in $L_p$, that $Dy(\cdot)=z(\cdot)$ weakly, that $y(\cdot)$ is in $W^{1,p}$, and that $y_\partial(\cdot)$ is the Sobolev trace of $y(\cdot)$. 
 
 We pick measurable functions $u(\cdot)\colon\Omega\to U$ and $u_\partial(\cdot)\colon\partial\Omega\to U_\partial$ satisfying 
 \begin{align}
 \label{eq:argmin1}
 &u(x)\in \operatornamewithlimits{arg\,min}_{\substack{u\in U\\\A_i(x,y(x),z(x),u)=0\\\B_i(x,y(x),z(x),u)\leq 0}}L(x,y(x),z(x),u),\qquad x\in \Omega,\\
 \label{eq:argmin2}
 &u_\partial(x)\in \operatornamewithlimits{arg\,min}_{\substack{u\in U_\partial \\\A_{\partial,i}(x,y_\partial(x),u)=0\\\B_{\partial,i}(x,y_\partial(x),u)\leq 0}}L_{\partial}(x,y_\partial(x),u),\qquad x\in \partial \Omega.
 \end{align}
 Let us explain why we can do this. The set-valued maps induced by the right-hand sides of \eqref{eq:argmin1} and \eqref{eq:argmin2} $\Omega\rightrightarrows U$ and $\partial\Omega\rightrightarrows U_\partial$ have nonempty and closed images by \ref{oc:closedsublevels} and have measurable graphs, so we may apply a Characterization Theorem \cite[Th.\ 8.1.4]{aubinfrankowska} to conclude that they are weakly measurable.
 Then  we may apply the Kuratowski--Ryll-Nardzewski Selection Theorem (see \cite[Th. 18.13]{aliprantisborder} or \cite[Th.\ 8.1.3]{aubinfrankowska}) to get the functions $u(x)$ and $u_\partial(x)$.

The arguments used in the proof of Theorem \ref{thm:main} to prove that the constraints involving $\A_i,\B_i,\Cc_i,\A_{\partial,i},\B_{\partial_i},\Cc_{\partial_i}$ are satisfied by the points $(x,y(x),z(x))$, $x\in\Omega$, and $(x,y(x))$, $x\in\partial\Omega$,  carry through to the present situation with minimal modifications for the points $(x,y(x),z(x),u(x))$, $x\in\Omega$, and $(x,y(x),u_\partial(x))$, $x\in\partial\Omega$, using \ref{oc:convexcond1}.
 
 Thus with this choice, the triad $(y(\cdot), u(\cdot), u_\partial(\cdot))$ is a minimization candidate for $\Mc^{\mathrm{oc}}$, and by Jensen's inequality and \ref{oc:Lconvex} we have
 \begin{align*}
  \Mc^{\mathrm{oc}}
  &\leq \int_{\Omega} L(x,y(x),z(x),u(x))\,dx+  \int_{\Omega_\partial} L_\partial(x,y(x),u_\partial(x))\,d\sigma(x)\\
  &= \int_{\Omega} \bar L(x,y(x),z(x))\,dx+  \int_{\Omega_\partial} \bar L_\partial(x,y(x))\,d\sigma(x)\\
  &\leq\int_{\Omega\times Y\times Z\times U} \bar L(x,y,z)\,d\mu(x,y,z,u)
  +  \int_{\Omega_\partial\times Y\times U_\partial} \bar L_\partial(x,y)\,d\mu_\partial(x,y,u)\\
  &\leq\int_{\Omega\times Y\times Z\times U} L(x,y,z,u)\,d\mu(x,y,z,u)
  +  \int_{\Omega_\partial\times Y\times U_\partial} L_\partial(x,y,u)\,d\mu_\partial(x,y,u),
 \end{align*}
 which implies indeed that $\Mc^{\mathrm{oc}}\leq M_{\mathrm{r}}^{\mathrm{oc}}$.
 \end{proof}

\section{Application to micromagnetics}\label{sec:magnetics}
In this section we describe how variational  problems appearing  in  micromagnetics fit our convexity assumptions and hence can be solved with our linear formulation with occupation measures.

We confine ourselves to the two-dimensional situation.  However, results of this section hold also  in the $N$-dimensional case, too, for any $N\ge 2$. See e.g. \cite{pedregal0}.
In the classical theory of rigid  ferromagnetic bodies, based mainly on \cite{Brown}, a {\it magnetization } $m\colon\O\to\R^2$, describing the state of the body $\O\subset\R^2$,
depends on a position $x\in\O$ and has a given temperature-dependent magnitude
$$
|m(x)| =w(T)\ \ \mbox{for almost all $x\in\O$},$$
where $w(T)=0$ for $T\ge T_c$ the so-called Curie point. Let us treat the case when the temperature is fixed below the Curie point
and thus we shall assume that $|m|=1$ almost everywhere in $\O$.
In the so-called no-exchange formulation, the energy of a large rigid ferromagnetic body $\O\subset \R^2$ consists of three parts and the variational principle governing equilibrium configurations
can be stated (see e.g. \cite{jam-kin} and references therein) as the problem of minimizing
\begin{eqnarray}\label{microfunct}
E(m)=\int_\O \ff(m(x))\,dx-\int_\O H(x)\cdot m(x)\,dx+\frac{1}{2}\int_{\O}\nabla u_m(x)\cdot m(x)\,dx \end{eqnarray}
with respect to $m$ and $u_m$, where $f\colon \mathcal{S}^1\to [0;+\infty)$ is continuous function  defined on the unit sphere  in $\R^2$ centered at the origin $\mathcal{S}^1$. Further,  $m:\O\to\R^2$ satisfies
\begin{equation}\label{circleconstraint}|m(x)|=1\qquad \text{a.e. $x\in\O$,}
\end{equation}
$H\colon\O\to\R^2$ is an applied external magnetic field  and $u_m:\O\to\R$ is a potential  of an induced magnetic field. The first term in \eqref{microfunct} is an anisotropy energy with a density $\ff$ which is supposed to be an
 even nonnegative function depending on material properties and
 exhibiting crystallographic symmetry. The second term is an
 interaction energy and the last term is a magnetostatic energy coupled
 with the magnetization field through the equation
\begin{eqnarray}\label{maxwell}
\mbox{div}(-\nabla u_m + m\chi_\O)=0 \ \mbox{ in } \R^2,\end{eqnarray}
where $\chi_\O:\R^2\to\{0,1\}$ is the characteristic function of $\O$.
Equation \eqref{maxwell} is interpreted in the classical weak sense, i.e., 
\begin{equation}\label{eq:weakdiv}
\int_{\R^2} \nabla \phi\cdot(-\nabla u_m + m\chi_\O)\,dx=0\qquad \forall\,\phi \in C_0^\infty(\R^2).
\end{equation}




Let us 
denote 
$$
\add\coloneqq\{ m\in L^2(\O;\R^2);\ |m(x)|=1 \mbox{ for almost all $x\in\O$}\}.$$
Eventually, we are concerned with the problem of minimizing
\begin{eqnarray}\label{equivformulation}
E(m)=\int_\O \ff(m(x))\, dx-\int_\O H(x)\cdot m(x)\,dx
+\frac{1}{2}\int_{\O}\nabla u_m(x)\cdot m(x)\,dx
\end{eqnarray}
with respect to $m$ and $u_m$, and  subject to  \eqref{eq:weakdiv}. It was shown in \cite{jam-kin} that if $N=1$   the infimum of \eqref{equivformulation} is not necessarily attained even if $H=0$. The reason is that minimizing sequences of magnetizations    oscillate between $\tilde m_1$ and $-\tilde m_1$  keeping the divergence of the magnetization small, so that the gradient of $u_m$ is negligible.  This construction leads to a minimizing sequence  that weakly converges to zero in $L^2(\O;\R^2)$  and at the same time it shows that $\inf_{m \in \add} E(m)=0$. However, no minimizer exists and  $m=0$ does not belong to $\add$.   Therefore it is desirable to construct the largest lower semicontinuous envelope of $E$ from  \eqref{equivformulation} and there are well known techniques how to do that \cite{d04,pedregalenv}. One uses parameterized Young measures, the other one
extends $\ff$ by infinity outside $\mathcal{S}^1$ and then calculates the convex envelope of this newly defined function. Obviously, the convex envelope is  finite only on the closed unit ball in $\R^2$.
The relaxed problem is convex but not linear,  since the stray-field energy, i.e. the last term in \eqref{equivformulation} is quadratic.

\subsection{Formulation with Young measures}

As  $\add$ is not convex  we cannot rely
on direct methods \cite{d04} to prove the existence of a solution.
 In fact, the solution to (\ref{microfunct}) need not exist in $\add$, cf. \cite{jam-kin} for the case $H=0$. Therefore, it makes sense to look for a relaxation of the problem. It is known \cite{pedregal0} that this can be done using  Young measures \cite{y69}.  Moreover, Young measures competing in the minimization problem are supported on $\cs$, i.e. on a compact set.
The relaxed problem consists of minimizing
 \begin{align}\label{relaxmic}
E_Y(\nu):=\int_\O\int_{\mathcal{ S}^1}\ff(A)\nu_x(dA)\,dx -\int_\O H(x)\cdot m(x)\,dx +\frac{1}{2}\int_{\O}\nabla u_m(x)\cdot m(x)\,dx ,\end{align}
with respect to $\nu$ and $u_m$,
subject to \eqref{eq:weakdiv} with  $\ m(x)=\int_{\mathcal{ S}^1} A\nu_x(dA)$ for a.a. $x\in\O $ and $\nu \in \bar\add := \mathcal{Y}^\infty(\O;\mathcal{S}^1)$, where $\mathcal{Y}^\infty(\O;\mathcal{S}^1)$ denotes the space of Young measures such that $\nu_x$ is a probability measure on $\mathcal{S}^1$ for a.a. $x \in \O$ and the dependence of $\nu_x$ on $x$ is measurable.

 On the other hand,  it was shown in \cite[Thm.~3.4, Remark 3.7]{desim}  that if $\ff: {\mathcal{ S}^1}\to [0,+\infty)$ and $H\in L_{\rm loc}^2(\O;\R^2)$ then 
 \begin{equation}\label{eq:desimone} 
 \inf_{m\in\add}E(m) =\min_{\nu\in\bar\add} E_Y(\nu)=\min_{m\in\add^{**}}E^{**}(m)
 \end{equation}
 where
$$
\add^{**}=\{ m\in L^2(\O;\R^2);\ | m(x)|\le 1\ \mbox{ for
almost all $x \in \O$}\} $$ and
\begin{eqnarray}\label{relfci}
E^{**}(m)=\int_\O
\hat\ff^{**}(m(x))\, dx-
\int_\O H(x)\cdot m(x)\, dx+\frac{1}{2}\int_{\O}\nabla u_m(x)\cdot m(x)\, dx,\end{eqnarray}
and $\hat\ff^{**}:\R^2\to\R$, $\hat\ff^{**}=\sup\left\{f;\ f\le\hat\ff\ ,\
 \mbox{ $f$ is convex}\right\}$
 is the convex envelope of $\hat\ff:\R^{2}\to\R\cup\{+\infty\}$,
\begin{equation*}
 \hat\ff(m) =\left\{\begin{array}{ll}
                         \ff(m) & \mbox{ if $|m|=1$}\\
                           +\infty & \mbox{ otherwise.}
                          \end{array}
                   \right. 
\end{equation*}
Again, $u_m$ in \eqref{relfci} is calculated from 
\eqref{eq:weakdiv}
The functional $E^{**}$ is sequentially weakly lower semicontinuous on $\add^{**}$
 and it possesses a minimum on $\add^{**}$. 
 We again emphasize that both \eqref{relaxmic} and \eqref{relfci} are convex but nonlinear. The formulation of these  problems in terms of occupation measures leads to a linear problem  that makes it attractive for numerical solution. Similarly, this applies to some problems in linear and nonlinear elasticity where the lower semicontinuous envelope of the original problem is convex; see \cite{kohn,raoult} for some examples.




\subsection{Formulation with occupation measures}
\label{sec:occupationmeasureformulation}

To formulate the problem \eqref{equivformulation} in terms of occupation measures following Definition \ref{def:M2}),  let $\Omega$  be a subset of $\R^2$ containing $\O$, and let $Y=\R^3$ and $Z=\R^{2\times 3}$. Right away, we see that the problem \eqref{equivformulation} with constraints \eqref{circleconstraint} and \eqref{eq:weakdiv} can be modelled as
\begin{alignat}{3}
\label{prob1} & &\min_{ (\mu,\mu_\partial) \in\OMold_2}& \quad&&\int_{\Omega\times Y\times Z}f(y_1,y_2)-H(x)\cdot(y_1,y_2)+\frac12(z_3\cdot(y_1,y_2))\,d\mu(x,y,z)\\
\label{prob2} &&\text{subject to}&&& \operatorname{supp} \mu\subseteq \Omega\times (S_1\times \R)\times Z\subset \Omega\times Y\times Z,\\
\label{prob3} &&&&& \int_{\Omega\times Y\times Z}\nabla \phi(x)\cdot(-z_3+(y_1,y_2)\chi_\O(x))\,d\mu(x,y,z)=0,\quad\phi\in C^1_0(\Omega).
\end{alignat}
Observe that the solutions of \eqref{equivformulation} are of the form $y(x)=(m(x),u(x))$, where $m(x)$ is in the unit circle $S^1\subset\R^2$ for all $x\in\Omega$, and $u(x)\in\R$. In the formulation above, $(y_1,y_2)$ correspond to $m(x)$ and $y_3$ corresponds to $u(x)$, so that also $z_3$ corresponds to $\nabla u(x)$.

Let us now show how to put this in the form of our problem \eqref{opt:relaxedold}.
To get the integral in \eqref{prob1}, let
\[L(x,y,z)=f(y_1,y_2)-H(x)\cdot(y_1,y_2)+\frac12(z_3\cdot(y_1,y_2)),\quad (x,y,z)\in\Omega\times Y\times Z.\]
To model the constraint \eqref{prob2}, let
\[A(x,(y_1,y_2,y_3),z)=y_1^2+y_2^2-1.\]
To model the constraint \eqref{prob3}, we use the trick described in Example \ref{ex:constraints}: we use $I=C_0^1(\Omega)$ as index set and we let 
\[\Cc_\phi(x,y,z)=\nabla\phi(x)\cdot (-z_3+(y_1,y_2)\chi_\O(x)),\qquad \phi\in C_0^1(\Omega).\]
All other functions $L_\partial, A_{i,\partial},B_i,B_{i,\partial},C_{i,\partial}$ in \eqref{opt:relaxedold} can be set to 0.

Observe that even if $f$ is convex, Theorem \ref{thm:main} cannot be applied to \eqref{prob1}--\eqref{prob3} above because the constraint \eqref{prob2} is not convex. Instead, we can apply Theorem \ref{thm:convexificationb}, which tells us that if we take the affine occupation relaxation \eqref{opt:relaxed}, that is, if we minimize \eqref{prob1} over $\OM_2$ (rather than $\OMold_2$; see \eqref{eq:magnetizationaffine} below), then we get the same result $\hat M$ as in the convexification \eqref{opt:convexified2b}. The latter coincides with the problem of minimizing $E^{**}$ as in \eqref{relfci} over $\add^{**}$ and, by \eqref{eq:desimone}, with \eqref{equivformulation}. We have proved:

\begin{proposition}\label{prop:magnet_nogap}
 The problem \eqref{equivformulation} is equivalent to the (computationally tractable) problem
 \begin{alignat}{3}\label{eq:magnetizationaffine}
&&\min_{ (\mu,\mu_\partial) \in\OM_2}& \quad&&\int_{\Omega\times Y\times Z}f(y_1,y_2)-H(x)\cdot(y_1,y_2)+\frac12(z_3\cdot(y_1,y_2))\,d\mu(x,y,z)\\
 &&\mathrm{subject\ to}&&& \text{\eqref{prob2} $\mathrm{and}$ \eqref{prob3},} \nonumber
\end{alignat}
that is, the minimum in~\eqref{eq:magnetizationaffine} is equal to the infimum in \eqref{equivformulation}.
\end{proposition}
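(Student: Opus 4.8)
The plan is to recognize Proposition~\ref{prop:magnet_nogap} as a straightforward consequence of the machinery already assembled, so that the proof amounts to verifying that the micromagnetics problem fits the hypotheses of Theorem~\ref{thm:convexificationb} and then chaining together the equivalences from \eqref{eq:desimone}. The key point is that the proposition's claimed equivalence factors as a chain: the value of \eqref{eq:magnetizationaffine} equals $\hat M$ (by Theorem~\ref{thm:convexificationb}), which equals the convexified problem \eqref{opt:convexified2b}, which in turn equals the micromagnetics minimum in \eqref{equivformulation} (by the known result \eqref{eq:desimone} from \cite{desim}).

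First I would set up the correspondence between the abstract data $(L, A, C_\phi, Y, Z)$ introduced in Section~\ref{sec:occupationmeasureformulation} and the micromagnetics functional \eqref{equivformulation}, making explicit that the nonconvexity lives entirely in the support constraint \eqref{prob2}, equivalently in the set $K(x) = (S^1 \times \R) \times Z$, and that the set-valued map $K$ is here constant in $x$ (hence trivially closed-valued and continuous in the Hausdorff metric), while $L$ is continuous in $(y,z)$. This confirms the hypotheses of Theorem~\ref{thm:convexificationb} are met, so that the affinely relaxed occupation measure value \eqref{eq:magnetizationaffine} equals $\hat M$, the infimum of the convexified problem \eqref{opt:convexified2b}.

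Next I would identify the convexified problem \eqref{opt:convexified2b} with the De~Simone relaxation $E^{**}$ over $\add^{**}$. The essential computation is that the convex envelope $\hat L$ of $L$ over $K(x)$ reduces, in the first ($f$-dependent) term, to the convex envelope $\hat f^{**}$ of $f$ extended by $+\infty$ off $S^1$, precisely because the constraint $|m|=1$ is exactly the convexified-to-unit-ball constraint $\conv(S^1) = \overline{B(0,1)}$ that defines $\add^{**}$; the interaction and magnetostatic terms are already affine, respectively quadratic, in the relevant variables and pass through convexification unchanged since they are not the source of the relaxation. Here I must be careful that the magnetostatic term, being handled through the integral/Liouville-type constraint \eqref{prob3} rather than appearing in $L$ fiberwise, is treated consistently---this is where I would check that the constraint \eqref{prob3} correctly encodes \eqref{eq:weakdiv} after passing to the centroid $m(x) = \int y\,d\nu_x$, using the linearity of \eqref{prob3} in $(y,z)$ exactly as in the Liouville verification inside the proof of Theorem~\ref{thm:convexificationb}.

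The main obstacle I anticipate is reconciling the magnetostatic (stray-field) energy with the framework: that term is quadratic, not convex-in-the-pointwise-sense handled by $\hat L$, and in fact the relaxed problems \eqref{relaxmic} and \eqref{relfci} are explicitly convex but \emph{nonlinear}. The resolution is that the stray-field coupling enters through the auxiliary variable $u_m$ (encoded via $y_3$, $z_3$ and the weak-divergence constraint \eqref{prob3}), so that once the centroid magnetization $m(\cdot)$ is recovered from the occupation measure, $u_m$ is determined by \eqref{eq:weakdiv} and the quadratic term is evaluated on the recovered $m$; thus the subtle step is verifying that the occupation-measure formulation does not introduce additional slack in this coupled term beyond what $\hat f^{**}$ already captures, which is guaranteed precisely because \eqref{prob3} is an affine (integral) constraint compatible with the affine test functions of $\OM_2$. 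Once this is in place, the chain $\eqref{eq:magnetizationaffine} = \hat M = \eqref{opt:convexified2b} = E^{**}\text{-problem} = \eqref{equivformulation}$ closes, the last equality being \eqref{eq:desimone}, and the proof concludes.
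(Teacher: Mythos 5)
Your proposal follows essentially the same route as the paper: the paper likewise observes that Theorem~\ref{thm:main} fails because of the nonconvex constraint \eqref{prob2}, applies Theorem~\ref{thm:convexificationb} to identify the value of \eqref{eq:magnetizationaffine} with $\hat\Mc$ from \eqref{opt:convexified2b}, identifies the latter with the $E^{**}$ problem over $\add^{**}$, and closes the chain with \eqref{eq:desimone}. Your additional checks---that $K(x)=(S^1\times\R)\times Z$ is constant (hence closed-valued and Hausdorff-continuous) and that the affine integral constraint \eqref{prob3} passes through the centroid/Jensen argument without introducing the slack exhibited in Examples~\ref{ex:integralconstraints} and \ref{ex:integralconstraints2}---are sound and in fact make explicit points the paper's terse justification leaves implicit.
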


\begin{remark} Since the occupation measure relaxation \eqref{prob1} is sandwiched between the affinely relaxed problem \eqref{eq:magnetizationaffine} and the original problem \eqref{equivformulation}, it follows from  Proposition~\ref{prop:magnet_nogap} that these are all equal to each other. 
\end{remark}

\bigskip

\appendix
\section{Constancy lemma}

The following is essentially a version of the what is known as the Constancy Theorem in Geometric Measure Theory, and it is a slight generalization of  \cite[Lemma 2.9]{kr22}. We use it in the proof of Theorem \ref{thm:main}. 

\begin{lemma}\label{lem:lebesgueproj}
{ Let $\Omega$ be an  bounded, connected, open subset of $\R^n$.
 Let $\mu$ be a positive, compactly-supported, Radon measure on $\Omega$ such that $\mu(\Omega)=|\Omega|$ and
 \begin{equation}\label{eq:circulation} \int_{\Omega}\nabla \phi(x)\, d\mu(x)=0\end{equation}
 for all compactly-supported $\phi\in C^\infty_0(\Omega;\R)$.
 Then $\mu$ equals the $n$-dimensional Lebesgue measure on  $\Omega$.}
\end{lemma}
\begin{proof}

 Let $R\subset \Omega$ be a small parallelepiped, and let $\tau$ be a translation such that $\tau(R)\subset \Omega$. We will show that $\mu(R)=\mu(\tau(R))$, and since this will be true for all $R$ and all $\tau$, $\mu$ must be a positive multiple of Lebesgue on $\Omega$ \cite[Thm. 2.20]{rudin}. 
 Write $\tau$ as a finite composition of translations $\tau_i$ in the directions of the axes $x_1,\dots,x_n$,
 \[\tau=\tau_k\circ\tau_{k-1}\circ\dots\circ\tau_1.\]
 Denote $\tilde \tau_i=\tau_i\circ\tau_{i-1}\circ\dots\circ\tau_1$ and set $\tilde\tau_0$ equal to the identity. We assume $\tau_1,\dots,\tau_k$ have been chosen also in a such a way that the convex hull of $\tilde\tau_{i-1}(R)\cup\tilde\tau_{i}(R)$ is contained in $\Omega$ for each $i$. 
 For each $i=1,\dots,k$, let $j_i$ be such that $\tau_i$ is a translation in direction $x_{j_i}$. Recall that $\chi_{\tilde\tau_{i}(R)}$ is the indicator function of the translated rectangle $\tilde\tau_{i}(R)$, and let
 \begin{multline*}
  \psi_i(x_1,\dots,x_n)=\int_{-\infty}^{x_{j_i}}\chi_{\tilde\tau_{i}(R)}(x_1,\dots,x_{j_i-1},s,x_{j_i+1},\dots,x_n)\\
  -\chi_{\tilde\tau_{i-1}(R)}(x_1,\dots,x_{j_i-1},s,x_{j_i+1},\dots,x_n)ds.
 \end{multline*}
 Observe that
  \[\operatorname{supp}\phi_i=\overline{\operatorname{conv}}(\tilde\tau_{i-1}(R)\cup\tilde\tau_i(R)),\]
 which is a compact set properly contained in $\Omega$.
 Approximating with smooth, compactly-supported functions and using the Lebesgue dominated convergence theorem, we conclude that  \eqref{eq:circulation} is true for $\phi=\psi_i$, which means, for the $j_i^{\mathrm{th}}$ entry,
 \begin{equation*}
 \mu(\tilde\tau_{i}(R))- \mu(\tilde\tau_{i-1}(R))=\int_{\Omega}\chi_{\tilde\tau_i(R)}-\chi_{\tilde\tau_{i-1}(R)}\,d\mu 
  =\int_{\Omega}\frac{\partial \psi_i}{\partial x_{j_i}}\,d\mu
  =0.
 \end{equation*}
 By induction we get
 \[\mu(R)=\mu(\tilde\tau_{0}(R))=\mu(\tilde\tau_{k}(R))=\mu(\tau(R)).\]
 Thus $\mu=c\,dx$ for some $c>0$. Since $\mu(\Omega)=|\Omega|$, we conclude that $c=1$. 
 \end{proof}

\section{Technical lemma for the definition of occupation measures}

\begin{lemma}\label{lem:integrability}
Let $p\in[1,+\infty)$, and let $\mu$ and $\mu_\partial$ be measures satisfying the second item in Definition \ref{def:M}.
Then for all $\phi\in \testfunctions_p$ we have that the functions 
 \[\phi(x,y)\mathbf n(x)\quad\text{and}\quad \frac{\partial\phi}{\partial x}(x,y)+z^\top\frac{\partial\phi}{\partial y}(x,y)\] 
 are integrable with respect to $\mu_\partial$ and $\mu$, respectively.
\end{lemma}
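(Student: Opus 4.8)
The plan is to show that each of the two functions is dominated, in absolute value, by a function that is integrable against the relevant measure, so that the integrals in question are absolutely convergent. The only structure I will use is the finiteness of the total masses of $\mu$ and $\mu_\partial$ (for $\mu$ this is the normalization $\mu(\overline\Omega\times Y\times Z)=|\Omega|$) together with the moment bounds \eqref{eq:Lp}, and the three growth estimates that define membership in $\testfunctions_p$: for some $c>0$ one has $|\phi(x,y)|\le c(1+\|y\|^p)$, $\|\partial\phi/\partial x(x,y)\|\le c(1+\|y\|^p)$ and $\|\partial\phi/\partial y(x,y)\|\le c(1+\|y\|^{p-1})$.

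For the boundary integrand this is immediate. Since $\mathbf n(x)$ is a unit vector, $\|\phi(x,y)\mathbf n(x)\|=|\phi(x,y)|\le c(1+\|y\|^p)$, and hence
\[
\int_{\partial\Omega\times Y}\|\phi(x,y)\mathbf n(x)\|\,d\mu_\partial\le c\,\mu_\partial(\partial\Omega\times Y)+c\int_{\partial\Omega\times Y}\|y\|^p\,d\mu_\partial<+\infty
\]
by \eqref{eq:Lp} and finiteness of $\mu_\partial$. For the bulk integrand I split it as $\partial\phi/\partial x(x,y)+z^\top\partial\phi/\partial y(x,y)$ and bound the two pieces separately: the first by $c(1+\|y\|^p)$, which is $\mu$-integrable exactly as above, and the second by $\|z\|\,\|\partial\phi/\partial y(x,y)\|\le c\,\|z\|(1+\|y\|^{p-1})=c\|z\|+c\,\|z\|\,\|y\|^{p-1}$.

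The term $c\|z\|$ is $\mu$-integrable because $\|z\|\le 1+\|z\|^p$ and $\|z\|^p$ is comparable, up to a dimensional constant, to $\sum_i\|z_i\|^p$ (equivalence of norms on $\R^{m\times n}$), which is integrable by \eqref{eq:Lp}. The genuinely mixed term $\|z\|\,\|y\|^{p-1}$ is the only real obstacle, since it couples the $z$- and $y$-moments; I would dispatch it with Young's inequality applied with conjugate exponents $p$ and $p/(p-1)$,
\[
\|z\|\,\|y\|^{p-1}\le \tfrac{1}{p}\|z\|^p+\tfrac{p-1}{p}\|y\|^p,
\]
whose right-hand side is $\mu$-integrable by \eqref{eq:Lp}. (When $p=1$ the factor $\|y\|^{p-1}$ is the constant $1$, the term collapses to $c\|z\|$, no coupling arises, and this inequality is not needed.) Collecting the bounds shows both integrands are absolutely integrable. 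The main point to get right is thus the mixed moment estimate; everything else is routine domination using the finiteness of the measures and the moment condition \eqref{eq:Lp}.
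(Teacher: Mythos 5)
Your proof is correct and follows essentially the same domination strategy as the paper: bound the boundary integrand by $c(1+\|y\|^p)$ and split the bulk integrand into the $\partial\phi/\partial x$ part plus the mixed $z$--$y$ term, with everything reduced to the finite total masses and the moment bounds \eqref{eq:Lp}. The only (immaterial) difference is that you dispatch the mixed term with the pointwise Young inequality $\|z\|\,\|y\|^{p-1}\le \tfrac1p\|z\|^p+\tfrac{p-1}{p}\|y\|^p$, whereas the paper applies the H\"older inequality with conjugate exponents $p$ and $q=p/(p-1)$ at the level of the integrals against $\mu$ --- equivalent mechanisms resting on the same exponent arithmetic $(p-1)q=p$.
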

\begin{proof}
 This is true for $p=+\infty$ because in that case the supports of the measures involved are bounded and the integrands are continuous. For $p\in [1,+\infty)$ we have, from \eqref{eq:Lp},
 \begin{multline*}\int_{\partial\Omega\times Y}\|\phi(x,y)\mathbf n(x)\|\,d\mu(x,y)\leq \int_{\partial\Omega\times Y}c(1+\|y\|^p)\|\mathbf n(x)\|\,d\mu_\partial(x,y)\\=\int_{\partial\Omega\times Y}c(1+\|y\|^p)\,d\mu_\partial(x,y)<+\infty
 \end{multline*}
 and, by the H\"older inequality with $\frac1p+\frac1q=1$, so that $p/q=p-1$, and using \eqref{eq:Lp},
 \begin{align*} 
  \int_{\Omega\times Y\times Z}&\left\|\frac{\partial\phi}{\partial x}(x,y)+z^\top\frac{\partial \phi}{\partial y}(x,y)\right\|_1\,d\mu(x,y,z)\\
  &\leq   \int_{\Omega\times Y\times Z}\left\|\frac{\partial\phi}{\partial x}(x,y)\right\|+\sum_i\left|z_i^\top\frac{\partial \phi}{\partial y}(x,y)\right|\,d\mu(x,y,z)\\
    &\leq   \int_{\Omega\times Y\times Z}c(1+\left\|y\right\|^p)d\mu(x,y,z)\\
    &\qquad+\sum_i\left(\int_{\Omega\times Y\times Z}\left\|z_i\right\|^{p}d\mu(x,y,z)\right)^{\frac1p}\left(\int_{\Omega\times Y\times Z}\left\|\frac{\partial \phi}{\partial y}(x,y)\right\|^q\,d\mu(x,y,z)\right)^{\frac1q}\\
    &\leq   \int_{\Omega\times Y\times Z}c(1+\left\|y\right\|^p)d\mu(x,y,z)\\
    &\qquad+\sum_i\left(\int_{\Omega\times Y\times Z}\left\|z_i\right\|^{p}d\mu(x,y,z)\right)^{\frac1p}\left(\int_{\Omega\times Y\times Z} c^q(1+\left\|y\right\|^{p-1})^q\,d\mu(x,y,z)\right)^{\frac1q}\\
    &\leq   \int_{\Omega\times Y\times Z}c(1+\left\|y\right\|^p)d\mu(x,y,z)\\
    &\qquad+\sum_i\left(\int_{\Omega\times Y\times Z}\left\|z_i\right\|^{p}d\mu(x,y,z)\right)^{\frac1p}\\
    &\qquad\times\left(\left(\int_{\Omega\times Y\times Z} c^qd\mu(x,y,z)\right)^{\frac1q}+\left(\int_{\Omega\times Y\times Z} c^q\left\|y\right\|^{p-1})^q\,d\mu(x,y,z)\right)^{\frac1q}\right)\\
    &\leq   \int_{\Omega\times Y\times Z}c(1+\left\|y\right\|^p)d\mu(x,y,z)\\
    &\qquad+\sum_i\left(\int_{\Omega\times Y\times Z}\left\|z_i\right\|^{p}d\mu(x,y,z)\right)^{\frac1p}\\
    &\qquad\times
    \left(c\mu(\Omega\times Y\times Z)^{\frac{1}{q}}+c\left(\int_{\Omega\times Y\times Z} \left\|y\right\|^{p}\,d\mu(x,y,z)\right)^{\frac1q}\right)\\
    &<+\infty.\qedhere
 \end{align*}
\end{proof}

{\bf Acknowledgment.}
The work of  supported by the Barrande project 44702QH  and the M\v{S}MT \v{C}R project 8J20FR019. M.~Kru\v{z}\'{i}k was further supported by the  GA\v{C}R project 21-06569K. R. R\'ios-Zertuche gratefully acknowledges the support of ANR-3IA Artificial and Natural Intelligence Toulouse Institute (ANITI) and the hospitality of LAAS-CNRS. The research of M. Korda is partly supported by AI Interdisciplinary Institute ANITI funding, through the French ``Investing for the Future PIA3'' program under the Grant agreement n$^{\circ}$ANR-19-PI3A-0004. This research is also part of the programme DesCartes and is supported by the National Research Foundation, Prime Minister's Office, Singapore under its Campus for Research Excellence and Technological Enterprise (CREATE) programme.


\begin{thebibliography}{XX}
\bibitem{aubinfrankowska}
 J.P. Aubin, H. Frankowska. {Set-valued analysis.} Springer, 2009.
 
 \bibitem{Brown}
 W.F. Brown Jr. {Micromagnetics.} John Wiley \& Sons, 1963.
 
 \bibitem{aliprantisborder}
 D. Charalambos, B. Aliprantis. {Infinite dimensional analysis: a hitchhiker's guide.} Springer, 2013.
 
\bibitem{cbfg21}
A. Chernyavsky, J. J. Bramburger, G. Fantuzzi, D. Goluskin. Convex relaxations of integral variational problems: pointwise dual relaxation and sum-of-squares optimization. arXiv:2110.03079, 2021.

\bibitem{d04}
B. Dacorogna. Introduction to the calculus of variations. Imperial College Press, 2004.

\bibitem{desim}
  A. DeSimone.  Energy minimizers for large ferromagnetic bodies. {Arch. Rat. Mech. Anal.} 125:99--143, 1993. 

\bibitem{EvansGariepy} L. C. Evans, R. F. Gariepy. Measure theory and fine properties of functions. CRC Press, 1992.


\bibitem{ft22}
G. Fantuzzi, I. Tobasco. Sharpness and non-sharpness of occupation measure bounds for integral variational problems. arXiv:2207.13570, 2022.

\bibitem{gq09}
V. Gaitsgory, M. Quincampoix. Linear programming analysis of deterministic infinite horizon optimal
control problems with discounting. SIAM J. Control Optim. 48:2480--2512, 2009.

\bibitem{hk14}
D. Henrion, M. Korda. Convex computation of the region of attraction of polynomial control systems.
IEEE Trans. Autom. Control 59:297--312, 2014.

\bibitem{hkl20}
D. Henrion, M. Korda, J. B. Lasserre. The moment-SOS hierarchy - Lectures in probability, statistics, computational geometry, control and nonlinear PDEs. World Scientific, 2020.


\bibitem{hkw19}
D. Henrion, M. Kru\v z\'{\i}k, T. Weisser. Optimal control problems with oscillations, concentrations and discontinuities. Automatica 103:159--165,  2019.


\bibitem{jam-kin}
R. D. James, D. Kinderlehrer.  Frustration in ferromagnetic materials.
Continuum Mech. Thermodyn. 2:215--239, 1990.

\bibitem{kohn}
R. V. Kohn. The relaxation of a double-well energy. Continuum Mech. Thermodyn. 3:193--236, 1991.

\bibitem{khl21}
M. Korda, D. Henrion, J. B. Lasserre. Moments and convex optimization for analysis and control of nonlinear partial differential equations. In E. Tr\'elat, E. Zuazua (Eds). Numerical Control. Handbook of Numerical Analysis, Elsevier, 2021.

\bibitem{kr22}
M. Korda, R. R\'{\i}os-Zertuche.
The gap between a variational problem and its occupation measure relaxation. arXiv:2205.14132, 2022.

\bibitem{lhpt08}
J. B. Lasserre, D. Henrion, C. Prieur, E. Tr\'elat. Nonlinear optimal control via occupation measures and LMI relaxations. SIAM J. Control Optim. 47(4):1643-1666, 2008.

\bibitem{lukevnetukavesely}
 J. Luke{\v{s}},  I. Netuka, J. Vesel\'{y}.  Choquet's theory and the Dirichlet problem. Expositiones Mathematicae 20:229--254, 2002.

\bibitem{mwhl20}
S. Marx, T. Weisser, D. Henrion, J. B. Lasserre. A moment approach for entropy solutions to nonlinear hyperbolic PDEs.  Mathematical Control and Related Fields 10:113--140, 2020.

\bibitem{pedregal0}
 P. Pedregal.  Relaxation in ferromagnetism: the rigid case,  J. Nonlinear Sci. 4:105--125, 1994.
 
\bibitem{pedregalenv}
 P. Pedregal. {Parametrized measures and variational principles}, Birkh\"{a}user, 1997. 
 
 \bibitem{raoult}
A. Raoult. Quasiconvex envelopes in nonlinear elasticity. In J. Schröder, P. Neff (Eds).   Poly-, quasi- and rank-one convexity in applied mechanics. CISM International Centre for Mechanical Sciences, vol 516. Springer, 2010.

 \bibitem{roubicek}
T. Roub\'{\i}\v cek. Relaxation in optimization theory and variational calculus. W. de Gruyter, 2nd ed., 2020.

\bibitem{rudin}
W. Rudin. {Real and complex analysis.} McGraw-Hill, 1968.



\bibitem{v93}
R. Vinter. Convex duality and nonlinear optimal control. SIAM J. Control Optim. 31:518--538, 1993.

 \bibitem{y69}
L. C. Young. Lectures on the calculus of variations and optimal control theory. W. B.
Saunders Co., 1969.


 




 
\end{thebibliography}
\end{document}